\newtheorem{thm}{Theorem}[section]
\newtheorem{cor}[thm]{Corollary}
\newtheorem{lem}[thm]{Lemma}
\newtheorem{prop}[thm]{Proposition}
\theoremstyle{definition}
\newtheorem{rem}[thm]{Remark}
\newtheorem{que}[thm]{Question}
\numberwithin{equation}{section}
\newcommand{\lemref}[1]{Lemma~\ref{#1}}
\newcommand{\propref}[1]{Proposition~\ref{#1}}
\newcommand{\remref}[1]{Remark~\ref{#1}}
\renewcommand{\O}{\mathcal{O}}
\renewcommand{\L}{\mathcal{L}}
\newcommand{\PP}{\mathbb{P}}
\renewcommand{\P}{\mathcal{P}}
\newcommand{\bpi}{\overline{\pi}}
\newcommand{\CC}{\mathbb{C}}
\newcommand{\B}{\overline{B}}
\renewcommand{\O}{\mathcal{O}}
\renewcommand{\L}{\mathcal{L}}
\renewcommand{\l}{\mathrm{l}}
\renewcommand{\c}{\mathrm{c}}
\title[A two-dimensional family of surfaces of general type with $p_g=0$ and $K^2=7$]{A two-dimensional family of surfaces of general type \\with $p_g=0$ and $K^2=7$}
\date{}
\author{Yifan Chen}
\author{YongJoo Shin}
\subjclass[2010]{Primary 14J10, 14J29}
\keywords{commuting involutions, surface of general type}
\begin{document}
\maketitle

\begin{abstract}
We study the construction of complex minimal smooth surfaces $S$ of general type with $p_g(S)=0$ and $K_S^2=7$. Inoue constructed  the first examples of such surfaces, which can be described as Galois $\mathbb{Z}_2\times\mathbb{Z}_2$-covers over the four-nodal cubic surface. Later the first named author constructed more examples as Galois $\mathbb{Z}_2\times\mathbb{Z}_2$-covers over certain six-nodal del Pezzo surfaces of degree one.

In this paper we construct a two-dimensional family of minimal smooth surfaces of general type with $p_g=0$ and $K^2=7$, as Galois $\mathbb{Z}_2\times\mathbb{Z}_2$-covers of certain rational surfaces
with Picard number three, with eight nodes and with two elliptic fibrations.
This family is different from the previous ones.
\end{abstract}

\section{Introduction}
Classifications and constructions of algebraic surfaces of general type are still a crucial area among algebraic geometry. There are several approaches for the classifications and the constructions (see \cite{WBCPAV84, IBFCRP11}).  In this paper,  we focus on minimal surfaces $S$ of general type with $p_g(S)=0$ and $K_S^2=7$.

Inoue \cite{MI94} constructed first examples of such surfaces, which can be described as  Galois $\mathbb{Z}_2\times\mathbb{Z}_2$-covers over the four-nodal cubic surface \cite[Example~4.1]{MendesPardini01}. These examples are called Inoue surfaces with $K_S^2=7$ (and called Inoue surfaces in this paper). Mendes Lopes and Pardini \cite{MendesPardini01} studied the bicaonical map of a surface $S$ of general type with $p_g=0$ and $K_S^2=7$ and showed that the bicanonical map of an Inoue surface has degree two.
Lee and the second named author \cite{YLYS14} provided all possible fixed loci  of an involution $\sigma$ on $S$ (see also \cite{CR15}). Especially,  they showed that there only two cases for the divisorial fixed part $R_\sigma$ when the quotient of $S$ by $\sigma$ is birational to an Enriques surface:
    \begin{enumerate}[\upshape (i)]
        \item $R_\sigma$ is a disjiont union of two smooth irreducible curves of genus $3$ and $1$ with self intersection numbers $0$ and $-1$ respectively.
        \item $R_\sigma$ is a smooth irreducible curve of genus $3$ with self intersection number $-1$.
    \end{enumerate}
Inoue surfaces realized case (i).
  Inspired by the construction of Inoue surfaces and the work of \cite{YLYS14}, the first named author
 studied pairs $(S, G)$, where $S$ is as above, $G \le \mathrm{Aut}(S)$ and $G$ is isomorphic to the Klein group $\mathbb{Z}_2\times \mathbb{Z}_2$ (see \cite{YC15}) and constructed a new family of examples, which are  Galois $\mathbb{Z}_2\times\mathbb{Z}_2$-covers over certain six-nodal del Pezzo surfaces of degree one (see \cite{YC13}).
 These surfaces realized  case (ii) and their bicanonical maps have degree one.

Set $G=\{\mathrm{Id}_S, g_1, g_2, g_3\}$ and let $R_i$ be the divisorial fixed part of $g_i$ for $i=1, 2, 3$. Then the first named author showed that there are only three possibilities for $(K_SR_1, K_SR_2, K_SR_3)$ (assuming $K_SR_1 \ge K_SR_2 \ge K_SR_3$)
$$\mathrm{(a)}\ (7, 5, 5);\ \mathrm{(b)}\ (5, 3, 3);\ \mathrm{(c)}\  (5, 3, 1)$$
(see \cite[Theorem 1]{YC15}).
The pairs $(S, G)$ in the case (a) are exactly the Inoue surfaces and
those in  the case (b) are exactly the ones constructed in \cite{YC13}.
 On the other hand the existence of the examples in the case (c) remained as an open problem.
  Recently, Rito \cite{CR18} constructed one example by using a computer program.

In this paper, we construct a two-dimensional family of surfaces in the case (c).
\begin{thm}[{cf. Remark \ref{rem:twodim}, Proposition \ref{prop:intermediate} and Corollary \ref{cor:birational}}]\label{thm:diffpara}
Let $S$ be a surface constructed in Section \ref{construction}. Then $\mathrm{Aut}(S)$ contains a subgroup
$G=\{\mathrm{Id}_S, g_1, g_2, g_3\} \cong \mathbb{Z}_2\times \mathbb{Z}_2$ and $(K_SR_1,K_SR_2,K_SR_3)=(5,3,1)$. Moreover, $K_S$ is ample and the bicanonical morphism of $S$ is birational.

 The surfaces $S$ are parametrized by a two-dimensional irreducible variety.
\end{thm}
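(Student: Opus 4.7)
The overall plan is to read every assertion off the explicit construction of $S$ as a Galois $\mathbb{Z}_2\times\mathbb{Z}_2$-cover $\pi\colon S\to Y$, where $Y$ is an eight-nodal rational surface of Picard number three admitting two elliptic fibrations. The Galois group of $\pi$ embeds into $\mathrm{Aut}(S)$, producing the required subgroup $G=\{\mathrm{Id}_S,g_1,g_2,g_3\}\cong\mathbb{Z}_2\times\mathbb{Z}_2$, and the divisorial fixed locus $R_i$ of $g_i$ is exactly the preimage of the branch divisor $D_i$ attached to the non-trivial character killed by $g_i$. Applying Pardini's standard formulas for abelian covers to this branch data (together with the canonical class and intersection numbers on a minimal resolution of $Y$ computed in Section~\ref{construction}) yields closed expressions for each $K_S R_i$; substituting the explicit numerical data gives $(K_SR_1,K_SR_2,K_SR_3)=(5,3,1)$ by direct calculation.

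To show that $K_S$ is ample, since $S$ is already minimal of general type, it suffices to exclude $(-2)$-curves. Any such curve $C\subset S$ is either $G$-invariant, in which case $\pi(C)\subset Y$ is a rational curve with tightly constrained intersection pattern against the branch divisors, or else its $G$-orbit consists of (up to) four disjoint $(-2)$-curves that descend to a single rational curve in $Y$. Using the Picard rank three description of $Y$, its two elliptic fibrations and the location of its eight nodes, we enumerate all candidate rational curves downstairs and verify, via the local behaviour of $\pi$ along the branch and ramification loci, that none of them produces a $(-2)$-curve on $S$.

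For birationality of the bicanonical morphism we invoke \propref{prop:intermediate} and \corref{cor:birational}: the map $\varphi_{|2K_S|}$ factors through the intermediate double cover $S\to S/g_i$ for a well-chosen $i$, and the strategy is to show that the induced map on this quotient is birational onto its image. Combined with the classification in \cite[Theorem~1]{YC15} for case (c), this factorisation rules out any further deck transformation of $\varphi_{|2K_S|}$, forcing degree one. The two-dimensionality asserted in \remref{rem:twodim} is finally a parameter count: the pair $(Y,D_1+D_2+D_3)$ varies algebraically in a family whose dimension, after subtracting the contribution of automorphisms preserving the branch data, equals two, and irreducibility follows because the construction depends polynomially on an irreducible set of parameters.

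The main obstacle is the verification of ampleness. In bidouble cover constructions from nodal rational surfaces it is easy to create hidden $(-2)$-curves, either through proper transforms of curves meeting the branch divisor tangentially or through $G$-orbits lying above $(-1)$-curves on the resolution of $Y$, so the enumeration of candidates must be exhaustive rather than suggestive. A secondary delicate point is distinguishing bicanonical degree one from degree two, which is exactly what singles out case (c) among the three possibilities in \cite[Theorem~1]{YC15} and requires a careful analysis of the intermediate quotient used in \propref{prop:intermediate}.
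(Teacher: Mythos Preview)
Your sketch for the group structure, for the values $K_SR_i$, and for the parameter count is broadly in line with the paper. Two points, however, diverge substantially from the actual argument.

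\medskip
\textbf{Ampleness of $K_S$.} You propose to enumerate candidate $(-2)$-curves on $S$ according to their $G$-orbit type and check case by case that none occurs. This is not the route taken, and it would be considerably harder than necessary. The paper observes that $\epsilon^*(2K_S)\equiv \overline{\pi}^{\,*}(D)$ with $D=\overline{B}_2+F$ on $W$, and then proves directly (Lemma~\ref{lem:B2F}) that $D$ is nef and that the only irreducible curves $C$ with $DC=0$ are the eight nodal curves $C_k,C_k'$. Since these are exactly the curves contracted by $\eta\colon W\to\Sigma$, it follows at once that $K_S$ is ample. No analysis of curves upstairs on $S$ is required; the whole verification happens on the rational surface $W$.

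\medskip
\textbf{Birationality of the bicanonical map.} Your description here is back to front. You write that $\varphi_{|2K_S|}$ ``factors through the intermediate double cover $S\to S/g_i$ for a well-chosen $i$'' and that one then shows the induced map on the quotient is birational. But if $\varphi$ factored through $S/g_i$ then $g_i$ would be a deck transformation and $\deg\varphi\ge 2$, which is the opposite of what you want. The paper's argument is: (i) by Lemma~\ref{lem:bir} the linear system $|D|=|\overline{B}_2+F|$ embeds $\Sigma$ in $\mathbb{P}^4$, so the $G$-invariant part of $|2K_S|$ already separates $G$-orbits; hence $\varphi$ is birational if and only if it is not composed with any $g_i$; (ii) the eigenspace dimensions of $H^0(S,\mathcal{O}_S(2K_S))$ under $G$ are computed (Corollary~\ref{cor:eigenspace}) to be $(5,2,1,0)$ for $(\mathrm{inv},\chi_1,\chi_2,\chi_3)$, and the nonvanishing of the $\chi_1$- and $\chi_2$-eigenspaces forces $\varphi\circ g_i\neq\varphi$ for every $i$ (Corollary~\ref{cor:notcomposed}). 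The intermediate quotients $S/g_i$ enter only in Proposition~\ref{prop:intermediate}, where their Kodaira dimensions are determined; they play no role in the proof that $\varphi$ is birational.
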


For the pairs $(S, G)$ in the cases (a) and (b), $S/G$ is a (singular) del Pezzo surface.
But this is no longer true for the case (c), which is the main obstacle to construct examples in the case (c). In this case, $S/G$ is a rational surface with nodes and $K_{S/G}^2=-1$ \cite[Theorem~1]{YC15}.
Denote by $B_i$ the image of $R_i$ under the quotient map $S\rightarrow S/G$.
Under certain birational map $S/G \dashrightarrow \PP^2$,
$B_1$ and $B_2$ correspond very singular curves of degree $6$ and degree $7$ respectively.
It is very hard to find such curves directly. We study two elliptic fibrations on $S/G$ in detail
and succeed to show that $B_1$ and $B_2$ correspond to irreducible smooth components in certain singular fibers of elliptic fibrations. See Remark~\ref{rem:notcubic}.

Let $\mathcal{M}_{1,7}^{\textrm{can}}$ be the Gieseker moduli space with $\chi(\O_S)=1$ and $K_S^2=7$ (see \cite{DG77}).
Inoue surfaces yield a $4$-dimensional irreducible connected component of $\mathcal{M}_{1,7}^{\textrm{can}}$ (see \cite{IBFC12}). And similarly the surfaces constructed by
 the first named author yield another irreducible connected component, which is of dimension $3$ (see \cite{YC15}).
 Thus it arises that
 \begin{que}
      Do the surfaces constructed in Section \ref{construction} yield an irreducible connected component of
                $\mathcal{M}_{1,7}^{\textrm{can}}$?
 \end{que}
We could not answer this question at the moment. On the one hand, we do not know whether any pair $(S, G)$ in the case (c) is in the $2$-dimensional family above. On the other hand, the study of deformations of the surfaces in the cases (a) and (b) depends on the fact that $S/G$ is a Del Pezzo surface (see \cite[Proof of Theorem~5.1 and Lemma 5.5]{IBFC12} and \cite[Proof of Proposition~5.1]{YC15}).

Bauer \cite{IB14} showed that any Inoue surface $S$ satisfies the Bloch conjecture by showing that
the Kodaira dimension $\kappa(S/g_i)<2$ \cite[Propositions 3.2 and 3.6]{IB14} and by applying the method of enough automorphisms in \cite{RB85,IHMM79}.
In the same way, the first named author proved that a surface in the case (b) also satisfies the Bloch conjecture (see \cite[Section 5]{YC13}).

 The Kodaira dimensions of the intermediate double covers $S/g_i$ of the surface $S$ constructed in Section \ref{construction} are considered.
\begin{prop}[{cf. Proposition \ref{prop:intermediate}}]
Let $S$ be a surface constructed in Section \ref{construction}. Then $S/g_1$ with $K_{S/g_1}^2=-2$ is birational to an Enriques surface, and $S/g_2$ is birational to a minimal properly elliptic surface and the minimal resolution of $S/g_3$ is a numerical Campedelli surface, a minimal surface of general type with $p_g=0$ and $K^2=2$.
\end{prop}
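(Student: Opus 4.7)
The proof plan is to analyze each intermediate double cover $S \to S/g_i$ separately, using the explicit description of $g_i$ coming from the $\mathbb{Z}_2\times\mathbb{Z}_2$-cover constructed in Section~\ref{construction}. Writing $T_i = S/g_i$ and $\widetilde{T}_i \to T_i$ for the minimal resolution of the nodes arising from the isolated fixed points of $g_i$, the first step is to read off the divisorial fixed part $R_i$ of $g_i$ and the isolated fixed points from the cover data: the class $K_S R_i$ is fixed by Theorem~\ref{thm:diffpara}, while $R_i^2$ and the smoothness/genus type of $R_i$ can be recovered from its image in the branch data of the cover. The standard double-cover formula
\[
K_{\widetilde{T}_i}^2 \;=\; \tfrac{1}{2}(K_S - R_i)^2
\]
then yields the three values $-2, 0, 2$ asserted in the statement. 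Since $S$ has $p_g=q=0$ and the singularities of $T_i$ are rational double points, invariance of $\chi(\mathcal{O})$ under Galois quotients gives $p_g(\widetilde{T}_i) = q(\widetilde{T}_i) = 0$ for each $i$.

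The case of $g_3$ is the most direct: once $K^2 = 2$ and $p_g = q = 0$ on $\widetilde{T}_3$ are in hand, $\widetilde{T}_3$ is a numerical Campedelli surface as soon as it is shown to be minimal. I would rule out the existence of a $(-1)$-curve $\Gamma$ on $\widetilde{T}_3$ by pulling it back through the double cover $S \to \widetilde{T}_3$; the resulting curves on $S$ contradict the ampleness of $K_S$ established in Theorem~\ref{thm:diffpara}. For $g_2$, the invariants $K^2 = 0$ and $p_g = q = 0$ leave three options: rational, Enriques blowup, or properly elliptic. The plan is to transport one of the two elliptic fibrations on $S/G$ highlighted in Section~\ref{construction} to an elliptic fibration on $\widetilde{T}_2$ and apply Kodaira's canonical bundle formula; the multiple fibers (or a positive-degree modular part) coming from the ramification of $\widetilde{T}_2 \to S/G$ should force $\kappa(\widetilde{T}_2) = 1$, and then minimality is automatic from $K_{\widetilde{T}_2}^2 = 0$.

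The main obstacle is the case of $g_1$. With $K^2 = -2$ and $p_g = q = 0$, the invariants are compatible both with a rational minimal model and with an Enriques minimal model blown up at two points, and one must carry out finer work to distinguish them. The strategy is to exploit the factorization $S \to S/g_1 \to S/G$: an elliptic fibration on $S/G$ pulls back to an elliptic pencil on $\widetilde{T}_1$, and a careful inventory of its multiple fibers should exhibit two double fibers on the minimal model, producing a non-trivial $2$-torsion element in the canonical class and hence an Enriques structure. Ruling out the rational alternative then amounts to producing a non-zero section of $2K_{\widetilde{T}_1}$ from the same fibration analysis. Unlike the case-(a) and case-(b) treatments of Bauer~\cite{IB14} and the first named author~\cite{YC13}, where $S/G$ is a Del Pezzo surface and a direct $2$-torsion line bundle argument suffices, here one must extract the Enriques structure from the elliptic-fibration geometry on the more complicated eight-nodal rational surface $S/G$, which is the step I expect to require the most care.
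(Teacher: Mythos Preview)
Your plan is sound and would lead to a correct proof, but it takes a more indirect route than the paper and, in particular, misjudges where the difficulty lies. The paper works ``from below'' rather than ``from above'': instead of starting from $S$ and the involution $g_i$, it uses the intermediate double cover $\bpi_i\colon V_i\to W$ associated to the data $2\L_i\equiv\Delta_{i+1}+\Delta_{i+2}$ and computes $2K_{V_i}\equiv\bpi_i^*(2K_W+2\L_i)$ directly. The key linear equivalences $\B_1\equiv-2K_W+E$ and $\B_2\equiv-2K_W+\B_3$ from Propositions~\ref{prop:elliptic1} and~\ref{prop:elliptic2} then simplify $2K_W+\Delta_{i+1}+\Delta_{i+2}$ to $2\B_3+\sum_{k=3,4}(C_k+C_k')$ for $i=1$ and to $F+\sum_{k=1,2,4}(C_k+C_k')$ for $i=2$. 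Since $\dim|\B_3|=0$ and $\dim|F|=1$, the Kodaira dimensions $\kappa(S/g_1)=0$ and $\kappa(S/g_2)=1$ drop out in one line each, without any inventory of multiple fibers or appeal to Kodaira's canonical bundle formula. Thus the case you flag as the ``main obstacle'' ($g_1$) is in fact no harder than the others once one exploits the explicit $\L_i$. Your approach via pulling back the elliptic fibrations to $\widetilde T_i$ and counting multiple fibers would eventually work, and has the merit of being more geometric, but it is substantially longer and requires tracking the ramification of $\widetilde T_i\to\Sigma$ over the $I_0^*$-fibers and the double fiber carefully; the paper's computation sidesteps all of this. For $g_3$ the paper simply cites \cite[Proposition~2.10]{YC15}, which contains essentially the minimality argument you sketch.
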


\begin{que}\label{Q2}
Does a surface $S$ constructed in  Section~4 satisfy the Bloch conjecture?
\end{que}
\noindent By \cite[Proposition 1.3]{IB14} and the proposition above, the question \ref{Q2} is equivalent to
\begin{que}
    For a surface $S$ construced in Section~4, does the minimal model of $S/g_3$ satisfy the Bloch conjecture?
 \end{que}

\medskip

\paragraph{Notation and conventions}
Through this article we work over the field of complex numbers.  Denote by $\equiv$ a linear equivalence between divisors. A $(-m)$-curve with a nonnegative integer $m$ on a smooth projective surface means that the curve is rational and of a self intersection number $-m$. In particular $(-2)$-curve is called a nodal curve.

\section{Configuration of points}\label{confpoints}
Let $\P$ be a subset of $\CC\PP^2$ which consists of six distinct points $p_0, p_1, \ldots, p_4, p$.
For $k=1, 2, 3, 4$, denote by $p_k'$ the infinitely near point over $p_k$ which corresponds to the line $\l_{p_0p_k}$. Here $\l_{q\bar{q}}$ stands for the line passing two points $q, \bar{q} \in \CC\PP^2$.
Assume that $\P$ satisfies the following conditions:
\begin{enumerate}
\item[\upshape (I)] Any three points in $\P$ are not contained in a line.
\item[\upshape (II)] There is a smooth conic $\gamma_1$ passing through the six points
                     $$p_1, p_2, p_3, p_3', p_4, p_4',$$
                    and there is a smooth conic $\gamma_2$ passing through the six points
                        $$p_1, p_1', p_2, p_2', p_3, p_4.$$

                    Here a curve passes through $p_k'$ means that this curve is tangent to
                     $\l_{p_0p_k}$ at $p_k$.
\item[\upshape (III)] The six points in $\P$ are not contained in a conic and the ten points
                      $$p_0, p_1, \ldots, p_4, p, p_1', \ldots, p_4'$$
                      are not contained in a cubic.
\item[\upshape (IV)] There is an irreducible cubic $\lambda_1$ passing through the seven points
                    $$p_0, p_1, p_2, p_2', p_3, p_4, p_4',$$
                    and having a singular point at $p$, and
                    there is an irreducible cubic $\lambda_2$ passing through the seven points
                    $$p_0, p_1, p_1', p_2, p_3, p_3', p_4,$$
                     and having a singular point at $p$.
\end{enumerate}

We aim to show that the existence of $\P$  and figure out the configuration of the six points.
We postpone the proofs of \lemref{lem:p0}, \lemref{lem:gamma0} and \propref{prop:p} to Appendix.
The upshot is to find the point $p$ and to verify the condition (IV) (see \remref{rem:upshot}).

Assume $p_1, \ldots, p_4$ satisfy the condition (I) then we may assume
\begin{align*}
p_1=(1:0:0),\ p_2=(0:1:0),\ p_3=(0:0:1),\ p_4=(1:1:1).
\end{align*}
Here we use $(x_1:x_2:x_3)$ as coordinates of $\CC\PP^2$.
Set
\begin{align*}
&o:=\l_{p_1p_2}\cap \l_{p_3p_4}=(1:1:0), &&\\
&r_0:=\l_{p_1p_4}\cap \l_{p_2p_3}=(0:1:1), && s_0:=\l_{p_1p_3}\cap \l_{p_2p_4}=(1:0:1), \\
&q_1:=\l_{p_1p_2}\cap \l_{r_0s_0}=(-1:1:0), && q_2:=\l_{p_3p_4}\cap \l_{r_0s_0}=(1:1:2).
\end{align*}
\begin{lem}\label{lem:p0}
The fives points $p_0, p_1, \ldots, p_4$ satisfy conditions {\rm (I)} and {\rm (II)} if and only if
 $p_0 \in \l_{r_0s_0}\setminus\{r_0, s_0, q_1, q_2\}$, i.e. $p_0=(t:1:1+t)$ and $t \not =0, \pm 1$.
\end{lem}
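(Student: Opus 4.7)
The plan is to first reformulate condition (II) geometrically using pole--polar duality with respect to a smooth conic, then back this up with an explicit pencil computation, and finally trace through the exclusions on the parameter $t$.

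For the existence of $\gamma_1$: a smooth conic $\gamma$ through $p_1, p_2, p_3, p_4$ is tangent to $\l_{p_0p_3}$ at $p_3$ and to $\l_{p_0p_4}$ at $p_4$ if and only if $p_0$ is the pole of the chord $\l_{p_3p_4}$ with respect to $\gamma$, since the tangent at a point of $\gamma$ is the polar of that point and $p_0$ lies on both tangents. Analogously, existence of $\gamma_2$ is equivalent to $p_0$ being the pole of $\l_{p_1p_2}$ with respect to some smooth member of the pencil of conics through $p_1,p_2,p_3,p_4$. Both $\l_{p_1p_2}$ and $\l_{p_3p_4}$ meet at the diagonal point $o$; by the classical fact that the diagonal triangle $\{o,r_0,s_0\}$ of the complete quadrangle $p_1p_2p_3p_4$ inscribed in any smooth conic is self-polar, the polar of $o$ is $\l_{r_0s_0}$, so the pole of any line through $o$ lies on $\l_{r_0s_0}$. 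This forces $p_0\in\l_{r_0s_0}$ in both cases.

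To make this precise and obtain the converse, I compute in coordinates. The pencil of conics through $p_1,\dots,p_4$ is $F=b_1x_2x_3+b_2x_1x_3+b_3x_1x_2$ with $b_1+b_2+b_3=0$. Writing $p_0=(a:b:c)$, the tangent to $F$ at $p_4$ (which, after using the pencil relation, simplifies to $b_1x_1+b_2x_2+b_3x_3=0$) matched against $\l_{p_0p_4}$ forces $(b_1:b_2:b_3)=(b-c:c-a:a-b)$; substituting into the tangency condition at $p_3$ and cancelling reduces to $(b-a)(a+b-c)=0$. The factor $a=b$ places $p_0$ on $\l_{p_3p_4}$, which is forbidden by (I), so $c=a+b$, i.e.\ $p_0\in\{x_3=x_1+x_2\}=\l_{r_0s_0}$. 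The parallel computation for $\gamma_2$ via tangency at $p_1$ and $p_2$ yields exactly the same relation $c=a+b$, confirming both existence conditions in (II) are equivalent to $p_0\in\l_{r_0s_0}$. Conversely, on $\l_{r_0s_0}$ the recipes above produce explicit $\gamma_1$ and $\gamma_2$, and non-degeneracy of a member of the pencil is equivalent to the product of its coefficients being nonzero.

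Finally, parametrize $\l_{r_0s_0}$ by $p_0=(t:1:t+1)$, which covers all points of $\l_{r_0s_0}$ except $s_0=(1:0:1)$; thus $s_0$ is automatically avoided by the choice of parametrization. Condition (I) requires $p_0$ not to lie on any of the six lines $\l_{p_ip_j}$; intersecting these with $\l_{r_0s_0}$ gives exactly $t\ne 0,\pm1$, with $t=0$ corresponding to $r_0$, $t=1$ to $q_2=(1:1:2)$, and $t=-1$ to $q_1=(-1:1:0)$. In the same range, the three pencil coefficients of each of $\gamma_1,\gamma_2$ are all nonzero, so both conics are smooth. The only real obstacle in this plan is bookkeeping: verifying that the same linear condition $c=a+b$ emerges from both tangency systems, and that the four excluded points $\{r_0,s_0,q_1,q_2\}$ correspond precisely to condition (I) together with the non-degeneracy of $\gamma_1,\gamma_2$. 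The underlying computation is routine linear algebra, while the pole--polar viewpoint makes the appearance of the line $\l_{r_0s_0}$ conceptually transparent.
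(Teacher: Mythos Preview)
Your proof is correct. Both you and the paper proceed by direct computation with the pencil of conics through $p_1,\dots,p_4$, but the organization differs. The paper parametrizes the pencil as $\c_\alpha:-(1+\alpha)x_1x_2+x_1x_3+\alpha x_2x_3=0$, writes down the four tangent lines $T_{p_k}\c_\alpha$, and computes the intersections $T_{p_3}\c_s\cap T_{p_4}\c_s$ and $T_{p_1}\c_t\cap T_{p_2}\c_t$ directly; equating them forces $t=-s$ and $p_0=(t:1:1+t)$. You instead fix $p_0=(a:b:c)$, let one tangency determine the pencil member, and read off the second tangency as the linear constraint $(b-a)(a+b-c)=0$. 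The two computations are equivalent and of the same length. What your write-up adds is the pole--polar interpretation: recognizing $p_0$ as the pole of a chord through the diagonal point $o$, together with the self-polarity of the diagonal triangle of an inscribed quadrangle, explains \emph{a priori} why the answer is the line $\l_{r_0s_0}$. The paper offers no such conceptual explanation (it simply observes afterward that $p_0$ lies on $x_1+x_2=x_3$), so your version gives more geometric insight at no real cost.
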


 \setlength{\unitlength}{1cm}
\begin{picture}(10,8)
  \thicklines
  \put(4,0.5){\line(5,3){5.9}}
  \put(4,4){\line(1,0){5.8}}
  \put(4,0.5){\line(0,1){6.9}}
  \put(4,4){\line(2,-1){3.2}}
  \put(2,2.4){\line(5,1){7.8}}
  \put(4,7.4){\line(2,-3){3.3}}
  \put(4,0.5){\line(2,3){2.3}}

   \put(4,7.6){$o$}
   \put(2.3,2.1){$p_0$}
   \put(3.5,4){$p_1$}
   \put(4,0.18){$p_2$}
   \put(6.3,4.3){$p_3$}
  \put(7.3,2.2){$p_4$}
    \put(3.6,2.92){$q_1$}
       \put(6.9,3.12){$q_2$}
    \put(5.7,2.8){$r_0$}
  \put(10.1,4){$s_0$}

  \put(2.2,4){{\footnotesize $(1:0:0)$}}
  \put(2.7,0.18){{\footnotesize $(0:1:0)$}}
  \put(6.7,4.3){{\footnotesize $(0:0:1)$}}
  \put(7.7,2.2){{\footnotesize $(1:1:1)$}}

  \put(3.9,7.3){$\bullet$}
   \put(2.3,2.37){$\bullet$}
  \put(3.9,3.9){$\bullet$}
  \put(3.9,0.4){$\bullet$}
   \put(6.2,3.9){$\bullet$}
   \put(7.15,2.33){$\bullet$}
   \put(3.9,2.7){$\bullet$}
   \put(6.6,3.22){$\bullet$}
   \put(5.65,3.05){$\bullet$}
   \put(9.8,3.9){$\bullet$}

\end{picture}

\begin{lem} \label{lem:gamma0} Assume the five points $p_0, p_1, p_2, p_3, p_4$ satisfy conditions {\rm (I)} and {\rm (II)}. Then they also satisfy the following property
\begin{itemize}
\item[(II')] There is a unique  cubic $\gamma_0$
passing through the nine points $p_0, p_1, p_1', \ldots, p_4, p_4'$.
\end{itemize}
\end{lem}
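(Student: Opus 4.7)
The plan is to prove existence by a dimension count and uniqueness by restricting any two candidate cubics to the conic $\gamma_1$ and showing that their difference forces a linear form to vanish at three non-collinear points.

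For existence, I would observe that cubic forms on $\CC\PP^2$ form a $10$-dimensional vector space, and that passing through each of the five points $p_0,p_1,\ldots,p_4$ together with prescribing the tangent $\l_{p_0p_k}$ at the smooth point $p_k$ for $k=1,2,3,4$ imposes altogether $9$ linear conditions. Hence the subspace $V$ of cubics satisfying all of them has dimension at least one, proving the existence of $\gamma_0$.

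For uniqueness, suppose $\gamma_0, \gamma_0'\in V$ are linearly independent and restrict both to the smooth conic $\gamma_1$. By condition (II) and by the tangency conditions on $\gamma_0, \gamma_0'$, the curves $\gamma_1$ and $\gamma_0$ (resp.\ $\gamma_1$ and $\gamma_0'$) share the tangent line $\l_{p_0p_k}$ at $p_k$ for $k=3,4$, so the intersection multiplicity there is at least $2$; at $p_1$ and $p_2$ the tangents differ (since $p_1', p_2'$ are \emph{not} among the tangency points of $\gamma_1$), so the intersection is transverse. The total $1+1+2+2=6$ saturates B\'ezout, so
$$\gamma_0\cdot\gamma_1 \;=\; p_1+p_2+2p_3+2p_4 \;=\; \gamma_0'\cdot\gamma_1 \quad \text{on } \gamma_1.$$
Hence $\gamma_0|_{\gamma_1}$ and $\gamma_0'|_{\gamma_1}$ are two nonzero sections of $\O_{\gamma_1}(3)\cong\O_{\PP^1}(6)$ with the same zero divisor, so they are scalar multiples of each other. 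Rescaling $\gamma_0'$ within $V$, I may assume $\gamma_0|_{\gamma_1}=\gamma_0'|_{\gamma_1}$, and therefore $\gamma_0-\gamma_0'=\gamma_1\cdot L$ for some linear form $L$, with $L\ne 0$ by linear independence.

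It remains to derive $L=0$. First, $p_0\notin\gamma_1$: otherwise $\l_{p_0p_3}$ would meet the smooth conic $\gamma_1$ at $p_0$ transversely and at $p_3$ tangentially, for a total of at least $3$ intersections, contradicting B\'ezout. Evaluating $\gamma_0-\gamma_0'=\gamma_1\cdot L$ at $p_0$ then yields $L(p_0)=0$. For $k=1,2$, the linear part of $\gamma_0-\gamma_0'$ at $p_k$ is a scalar multiple of $\l_{p_0p_k}$ (both $\gamma_0$ and $\gamma_0'$ have that tangent at $p_k$), while the linear part of $\gamma_1\cdot L$ at $p_k$ equals $L(p_k)\cdot T_{p_k}\gamma_1$. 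Since $T_{p_k}\gamma_1\not\parallel\l_{p_0p_k}$, we conclude $L(p_k)=0$. But then $L$ vanishes at the three non-collinear points $p_0, p_1, p_2$ (non-collinearity from (I)), forcing $L\equiv 0$, contradicting $\gamma_0\ne\gamma_0'$.

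The only subtle step is the dual use of condition (II): one needs both that $\gamma_1$ is tangent to $\l_{p_0p_k}$ at $p_k$ for $k=3,4$ (to saturate B\'ezout in the multiplicity count of $\gamma_0\cdot\gamma_1$) and that $\gamma_1$ is \emph{not} tangent to $\l_{p_0p_k}$ at $p_k$ for $k=1,2$ (to conclude $L(p_k)=0$ in the last step). This asymmetry of $\gamma_1$ is exactly what makes the argument close.
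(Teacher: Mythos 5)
Your proof is correct, and on the uniqueness half it takes a genuinely different route from the paper. The paper's own proof consists of one sentence for existence (``dimension counting'', exactly your count of $9$ linear conditions on the $10$-dimensional space of cubic forms) and then devotes its effort to showing that $\gamma_0$ is irreducible and smooth via the projection from $p_0$ (a degree-two map to $\PP^1$ ramified at $p_1,\dots,p_4$, which is impossible for a rational cubic by Hurwitz); uniqueness is not argued explicitly in the text and is in effect delegated to the explicit defining equation of $\gamma_0$ recorded in the subsequent remark, i.e.\ to a rank computation. Your synthetic argument --- restricting two independent members of the system to $\gamma_1$, saturating B\'ezout to force proportionality of the restrictions, and then killing the residual linear form $L$ at the non-collinear points $p_0,p_1,p_2$ --- is a clean coordinate-free replacement for that computation, and it correctly exploits the asymmetry of $\gamma_1$ (tangency at $p_3,p_4$, transversality at $p_1,p_2$). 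Two small points should be made explicit for the argument to close. First, before applying B\'ezout you must rule out that $\gamma_1$ is a component of a cubic in $V$: if $\gamma_0=\gamma_1+L_0$, the tangency conditions at $p_1$ and $p_2$ force $L_0=\l_{p_1p_2}$ and the condition at $p_0$ (using $p_0\notin\gamma_1$, which you prove) forces $p_0\in\l_{p_1p_2}$, contradicting (I). Second, the assertion that $\gamma_1$ is \emph{not} tangent to $\l_{p_0p_k}$ at $p_k$ for $k=1,2$ does not follow formally from the statement of (II) alone; it follows either from the explicit tangent lines computed in the appendix, or by applying B\'ezout to $\gamma_1\cap\gamma_2$, which already contains the four points $p_1,\dots,p_4$ and so cannot acquire an extra tangency. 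Finally, note that your proof establishes exactly the statement (II') but not the irreducibility and smoothness of $\gamma_0$, which the paper's proof supplies and later uses (when $\Gamma_0$ is declared a smooth elliptic curve in Section 3); if you adopt your argument you would still need the paper's projection-from-$p_0$ step for that purpose.
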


Let $p_1, \ldots, p_4$ as before. Observe that a smooth conic $\c_\alpha$ passing through these $4$ points has the defining equation
$$-(1+\alpha)x_1x_2+x_1x_3+\alpha x_2x_3=0, \alpha \not=0, -1.$$
For $\alpha=-t^2$, the conic $\c_{-t^2}$ also contains $p_0$. We denote it by $\c_{p_0p_1p_2p_3p_4}$.
\begin{prop}\label{prop:p}Let $p_1, \ldots, p_4$ be as before and fix $p_0=(t:1: 1+t)$ for $t \not=0, \pm 1$. Let $\c_{\alpha(t)}$ and $\c_{\beta(t)}$ be the smooth conics defined by
\begin{align*}
\c_{\alpha(t)}: -(1+\alpha(t))x_1x_2+x_1x_3+\alpha(t) x_2x_3=0,\\
\c_{\beta(t)}: -(1+\beta(t))x_1x_2+x_1x_3+\beta(t) x_2x_3=0,
\end{align*}
where $\alpha(t)+\beta(t)=-2t^2$, $\alpha(t)\beta(t)=t^2$, $t \not=0, \pm 1$.
Then the points $p_0, p_1, \ldots, p_4, p$ satisfy conditions {\rm (I)-(IV)} if and only if
$p \in (\c_{\alpha(t)}\cup \c_{\beta(t)})\setminus (\l_{p_0p_1}\cup \ldots \cup \l_{p_0p_4}\cup \gamma_0)$.
\end{prop}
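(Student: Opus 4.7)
The plan is to translate condition {\rm (IV)} into polynomial equations in the coordinates of $p$ and then factor them.

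Consider the linear system $\mathcal{N}_1 \subset |\O_{\CC\PP^2}(3)|$ of cubics satisfying the seven linear conditions imposed by {\rm (IV)} on $\lambda_1$, namely passage through $p_0, p_1, p_2, p_2', p_3, p_4, p_4'$. This is a $\PP^2$-net. Fix a basis $\{F^{(1)}, F^{(2)}, F^{(3)}\}$. For $F = c_1F^{(1)} + c_2F^{(2)} + c_3F^{(3)}$, singularity of $F$ at a point $p$ is equivalent to $\partial_k F(p)=0$ for $k=1,2,3$ (Euler's relation makes $F(p)=0$ redundant), a system of three linear equations in the $c_i$. A nonzero solution exists if and only if the $3\times 3$ Jacobian determinant
\[
D_1(p;t) := \det\bigl[\partial_k F^{(i)}(p)\bigr]_{1\le i,k\le 3}
\]
vanishes; this $D_1$ is a polynomial of degree $6$ in $p$. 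An analogous $D_2(p;t)$ arises from the seven conditions for $\lambda_2$.

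The heart of the argument is the factorization
\[
D_1 = \kappa_1\, \l_{p_0p_2}\cdot \l_{p_0p_4}\cdot \c_{\alpha(t)}\cdot \c_{\beta(t)},\qquad D_2 = \kappa_2\, \l_{p_0p_1}\cdot \l_{p_0p_3}\cdot \c_{\alpha(t)}\cdot \c_{\beta(t)},
\]
with nonzero constants $\kappa_1,\kappa_2$. The two linear factors of each $D_i$ can be identified geometrically: for any $p\in \l_{p_0p_2}$ there is a unique reducible cubic $\l_{p_0p_2}\cdot C \in \mathcal{N}_1$ singular at $p$, taking $C$ to be the conic through $p_1, p_3, p_4$ tangent to $\l_{p_0p_4}$ at $p_4$ and passing through $p$; hence $\l_{p_0p_2}$ divides $D_1$, and by the symmetric argument so does $\l_{p_0p_4}$. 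The same observation, but now with $\l_{p_0p_1}$ and $\l_{p_0p_3}$ playing this role, applies to $D_2$. Matching the residual degree-$4$ factor of each $D_i$ with $\c_{\alpha(t)}\cdot \c_{\beta(t)}$ is a direct symbolic calculation; it is in this step that the relations $\alpha(t)+\beta(t)=-2t^2$ and $\alpha(t)\beta(t)=t^2$ appear as the coefficients of the resolvent quadratic whose roots cut out the two conics.

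Since $\l_{p_0p_1}\cup \l_{p_0p_3}$ meets $\l_{p_0p_2}\cup \l_{p_0p_4}$ only at $p_0$, the common vanishing $\{D_1=D_2=0\}$ equals $\c_{\alpha(t)}\cup\c_{\beta(t)}$ together with finitely many points lying on the $\l_{p_0p_k}$. On the complement of the four lines (which condition {\rm (I)} forces, since collinearity of $p, p_0, p_k$ means $p\in \l_{p_0p_k}$), the singular cubic in each $\mathcal{N}_i$ is irreducible: the only remaining potential degeneration is the cubic $\gamma_0$, which can be singular at $p$ only if $p\in\gamma_0$, and this is prohibited by {\rm (III)} (otherwise $\gamma_0$ would pass through all ten points $p_0,\dots,p_4,p,p_1',\dots,p_4'$). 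The other requirement from {\rm (III)}, namely $p\notin \c_{-t^2}$, is automatic because $\c_{-t^2}\cap\c_{\alpha(t)}=\{p_1,p_2,p_3,p_4\}$ is already covered by the excluded lines; and {\rm (II)} involves only $p_0,\dots,p_4$, so it holds by the setup. This yields the ``if and only if''.

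The principal technical obstacle is the explicit factorization of $D_1$ and $D_2$. Once the linear factors are peeled off by the reducible-cubic observation, verifying that the residual degree-$4$ polynomial equals $\c_{\alpha(t)}\c_{\beta(t)}$ is a somewhat involved polynomial computation, best carried out with computer algebra; it is there that the defining equations of the two conics are extracted.
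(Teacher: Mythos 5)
Your proposal follows essentially the same route as the paper: both translate condition (IV) into the vanishing at $p$ of the Jacobian determinant of the net of cubics through the seven assigned base points, and both rest on the factorization of that degree-$6$ polynomial as (two lines through $p_0$)$\,\cdot\,\c_{\alpha(t)}\cdot\c_{\beta(t)}$ --- the paper obtains this by a direct computation from explicit reducible generators of each net, and your geometric derivation of the two linear factors is a pleasant supplement to that calculation. One small caution: your aside that ``the only remaining potential degeneration is the cubic $\gamma_0$'' is off the mark, since $\gamma_0$ is smooth (Lemma \ref{lem:gamma0}) and hence never the singular member --- the degenerations one would actually need to exclude to get \emph{irreducible} $\lambda_1,\lambda_2$ are reducible cubics in the net singular at $p$ --- though the paper's own proof is silent on irreducibility as well.
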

\begin{rem}\label{rem:twodim}
The symmetric group $\mathrm{S}_4$ of four letters  acts on $p_1, \ldots, p_4$ and then acts on $\PP^2$ faithfully.
Note that $(13)(24)$ preserves $\l_{r_0s_0}$ and maps the point $p_0(t)=(t:1:1+t)$ to $p_0(-t)=(-t:1:1-t)$. Also it preserves $\c_{\alpha(t)}$.
Observe that $\beta(-t)=\alpha(t)$. So $(13)(24)$ acts on
$$\{(p_0, p)|p_0\in \l_{r_0s_0}\setminus\{r_0,s_0,q_1,q_2\},
                 p\in (\c_{\alpha(t)}\cup \c_{\beta(t)})\setminus(\l_{p_0p_1}\cup \ldots \cup \l_{p_0p_4}\cup \gamma_0)\}.$$
and the quotient is $2$-dimensional and irreducible.
We conclude that isomorphism classes of the sets $\P$ of six points satisfying conditions (I)-(IV) are parametrized by a $2$-dimensional irreducible variety.
\end{rem}

\section{Certain rational surfaces with eight nodal curves and two elliptic fibrations}\label{ratellsur}
Fix a set $\P$ of six points of $\CC\PP^2$ satisfying the conditions (I)-(IV) in Section \ref{confpoints}.

Let $\rho \colon W \rightarrow \CC\mathbb{P}^2$ be the blowup of $\CC\mathbb{P}^2$ at the ten points $p_0, p_1, \ldots, p_4, p, p_1', \ldots, p_4'$.
Denote by $L$ the pullback of a line by $\rho$,
by $E_k$ the \textbf{total transform} of $p_k$, by $E_k'$ the exceptional divisor of $p_k'$ ($k=1, 2, 3, 4$),
by $E_0$ the exceptional divisor of $p_0$ and by $E$ the exceptional divisor of $p$.

\begin{enumerate}[\upshape (1)]
\item The Picard group of $W$ is
$$\mathrm{Pic}(W)=\mathbb{Z}L\oplus\mathbb{Z}E_0\oplus
\oplus_{k=1}^4(\mathbb{Z}E_k\oplus\mathbb{Z}E_k')\oplus\mathbb{Z}E,$$
 and thus $\rho(W)=11$.
 Moreover,
 \[
 -K_W= 3L-E_0-\sum_{k=1}^4 (E_k+E_k')-E \textrm{ and } K_W^2=-1.
 \]
 \item  $W$ contains the following nodal curves:
        \begin{itemize}
            \item for $k=1, \ldots, 4$, the strict transform $C_k$ of $\l_{p_0p_k}$:
                    $$C_k \equiv L-E_0-E_k-E_k';$$
            \item for $k=1, \ldots, 4$, the strict transform $C_k'$ of the $(-1)$-curve corresponding to $p_k$:
                        $$C_k' \equiv E_k-E_k';$$
            \item  the strict transforms $\Gamma_1,\Gamma_2$ of $\gamma_1$ and $\gamma_2$ (see
                         the condition~(II)):
                        \begin{align*}
                            \Gamma_1 &\equiv 2L-E_1-E_2-E_3-E_3'-E_4-E_4',\\
                            \Gamma_2 &\equiv 2L-E_1-E_1'-E_2-E_2'-E_3-E_4;
                        \end{align*}
            \item  the strict transforms $\Lambda_1,\Lambda_2$ of $\lambda_1$ and $\lambda_2$
                        (see the condition~(IV)):
                    \begin{align*}
                        \Lambda_1 &\equiv 3L-E_0-E_1-E_2-E_2'-E_3-E_4-E_4'-2E,\\
                        \Lambda_2 &\equiv 3L-E_0-E_1-E_1'-E_2-E_3-E_3'-E_4-2E.
                    \end{align*}
        \end{itemize}

        		Observe that the eight nodal curves $C_1,C_1',\dots,C_4,C_4'$ are disjoint and the curves
		\begin{align*}
		2\Gamma_1+C_1+C_1'+C_2+C_2',\ 2\Gamma_2+C_3+C_3'+C_4+C_4',\\ 2\Lambda_1+C_1+C_1'+C_3+C_3',\ 2\Lambda_2+C_2+C_2'+C_4+C_4'
		\end{align*}
        are of type $\tilde{D}_4$.

       \item $W$ also contains the following irreducible smooth elliptic curve:
     		\begin{itemize}
        		\item The strict transform $\Gamma_0$ of $\gamma_0$
                   \begin{align*}
                        \Gamma_0 &\equiv 3L-E_0-E_1-E_1'-E_2-E_2'-E_3-E_3'-E_4-E_4' \\
                                           & \equiv -K_W+E \textrm{ (see Lemma \ref{lem:gamma0} and the condition (III)).}
                    \end{align*}
                \end{itemize}
\end{enumerate}

\begin{lem}\label{lem:rational}
Let $F:=L-E_0$. Then $|F|$ defines a rational fibration $f \colon W \rightarrow \PP^1$.
 The singular fibers of $f$ are as follows:
   \begin{enumerate}[\upshape (a)]
        \item $E+\B_3$ with $E\B_3=1$, where $\B_3$ is a $(-1)$-curve which is the strict transform of $\l_{p_0p}$, and $\B_3 \equiv L-E_0-E$;
        \item $C_k+2E_k'+C_k'$ with $C_kE_k'=C_k'E_k'=1$ for $k=1,\ldots, 4$.
    \end{enumerate}
\end{lem}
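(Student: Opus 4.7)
The plan is to identify $|F|$ as the proper transform under $\rho$ of the pencil of lines through $p_0$, and to read off the reducible members one degenerate line at a time. First I compute the numerical data: in the orthogonal basis one has $F^2 = L^2 - 2L\cdot E_0 + E_0^2 = 0$ and $F\cdot K_W = -2$, so $p_a(F)=0$. The strict transform of any line through $p_0$ belongs to $|L - E_0| = |F|$, and $E_0$ is the only fixed component of $\rho^{\ast}|H_{p_0}|$ on $W$; hence $|F|$ is base-point-free of projective dimension one, and together with $F^2=0$ it defines a rational fibration $f \colon W \to \PP^1$ whose generic member is a smooth rational curve.

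Next I would identify the degenerate lines. By condition (I), no three points of $\P$ are collinear, so any line through $p_0$ meets $\{p_1,\ldots,p_4,p\}$ in at most one further point. Among the lines through $p_0$, only $\l_{p_0 p_k}$ passes through the infinitely near center $p_k'$, since $p_k'$ is by construction the direction of $\l_{p_0 p_k}$ at $p_k$. The candidates for reducible fibers are therefore exactly $\l_{p_0 p_1},\ldots,\l_{p_0 p_4}$ and $\l_{p_0 p}$.

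Finally I compute these five fibers. For $\l_{p_0 p}$, the strict transform is $\B_3 \equiv L - E_0 - E$, which is a $(-1)$-curve meeting $E$ transversally in one point, so the associated member of $|F|$ is $E + \B_3$. For $\l_{p_0 p_k}$, the relation $\rho^{\ast}(\l_{p_0 p_k}) = L$ combined with $E_k = C_k' + E_k'$ yields
\begin{equation*}
L - E_0 \;=\; C_k + E_k + E_k' \;=\; C_k + 2 E_k' + C_k',
\end{equation*}
where the multiplicity $2$ on $E_k'$ arises because both the strict transform of the line and the first exceptional pass through $p_k'$. The intersection numbers $C_k\cdot E_k' = C_k'\cdot E_k' = 1$ are then immediate from the basis relations. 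The only delicate point in the argument is this multiplicity bookkeeping at the infinitely near center $p_k'$; every other verification reduces to a routine check of intersection numbers against the stated classes.
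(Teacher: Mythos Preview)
Your proof is correct and follows the same approach as the paper: both identify $|F|$ with the pullback of the pencil of lines through $p_0$ and read off the singular fibers from the degenerate lines. The paper's proof is a one-line remark to this effect, while you have carefully spelled out the numerical checks, the use of condition~(I) to locate the reducible members, and the multiplicity bookkeeping at the infinitely near points $p_k'$.
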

Proof. Note that $|F|$ is the pullback of the pencil of lines passing through $p_0$ by $\rho$. \qed.

\begin{prop}\label{prop:elliptic1}
Let $\Gamma:=-2K_W+2E$. Then $|\Gamma|$ defines an elliptic fibration $h_1 \colon W \rightarrow \PP^1$
with all the singular fibers as follows:
\begin{enumerate}[\upshape (a)]
    \item two reducible fibers of type $I_0^*$:
             $$2\Gamma_1+C_1+C_1'+C_2+C_2',\ 2\Gamma_2+C_3+C_3'+C_4+C_4';$$
    \item the double fiber $2\Gamma_0$, where $\Gamma_0$ is in {\rm (3)};
    \item a reducible fiber $\B_1+E$,
           where $\B_1$ is an irreducible smooth elliptic curve such that
           $$\B_1\equiv -2K_W+E,\ \B_1E=1.$$
\end{enumerate}
Moreover, $\B_3$ intersects $\Gamma_0, \Gamma_1$ and $\Gamma_2$ transversely.
\end{prop}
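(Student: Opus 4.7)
The plan is to verify that $\Gamma$ behaves numerically like an elliptic-fiber class ($\Gamma^2 = 0 = K_W \cdot \Gamma$), exhibit two set-theoretically disjoint effective representatives inside $|\Gamma|$ to obtain a base-point-free pencil defining $h_1$, list the singular fibers, and close off the list by an Euler-number count. Using item (1) I first rewrite
\[
\Gamma = -2K_W + 2E \equiv 6L - 2E_0 - 2\sum_{k=1}^4(E_k + E_k')
\]
and check $\Gamma^2 = 0$, $K_W \cdot \Gamma = 0$. From items (2) and (3) I then verify
\[
2\Gamma_1 + C_1 + C_1' + C_2 + C_2' \equiv 2\Gamma_2 + C_3 + C_3' + C_4 + C_4' \equiv 2\Gamma_0 \equiv \Gamma.
\]
Direct intersection checks yield $\Gamma_0 \cdot \Gamma_1 = \Gamma_0 \cdot C_k = \Gamma_0 \cdot C_k' = 0$ for every $k$, so $2\Gamma_0$ and $2\Gamma_1 + C_1 + C_1' + C_2 + C_2'$ are set-theoretically disjoint; the pencil they span is therefore base-point-free, defines a morphism $h_1 \colon W \to \PP^1$ with fibers in $|\Gamma|$, and since $\Gamma_0$ is smooth of genus $1$ with $\Gamma_0^2 = 0$, the generic fiber is smooth elliptic.

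Next I identify the singular fibers. The divisors $2\Gamma_i + C_{2i-1} + C_{2i-1}' + C_{2i} + C_{2i}'$ ($i = 1, 2$) are of type $I_0^* = \tilde{D}_4$ by item (2), and $2\Gamma_0$ is a double fiber on the smooth elliptic curve $\Gamma_0$. For the fourth: since $E \cdot \Gamma = 0$ while $E \not\equiv \Gamma$, the curve $E$ must lie inside some fiber $F$, so the residual $\B_1 := F - E$ is effective of class $\Gamma - E \equiv -2K_W + E$ with $\B_1 \cdot E = 1$, $\B_1^2 = -1$, $K_W \cdot \B_1 = 1$, hence $p_a(\B_1) = 1$. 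To show $\B_1$ is a single smooth elliptic curve I contract the $(-1)$-curve $E$: for $\pi \colon W \to W'$ one has $\Gamma = -2\pi^* K_{W'}$, so $h_1$ descends to an elliptic fibration $\bar{h}_1 \colon W' \to \PP^1$ with fiber class $-2K_{W'}$, and $\pi(\B_1)$ is the fiber through the smooth point $\pi(E)$, with $\pi(\B_1)^2 = 0$ and $K_{W'} \cdot \pi(\B_1) = 0$. On $W'$, $e(W') = 12$ is already accounted for by the two $I_0^*$ fibers (the double fiber contributing $0$), so $\pi(\B_1)$ must be a smooth fiber; since $\B_1 \cdot E = 1$, its strict transform $\B_1$ is isomorphic to $\pi(\B_1)$, smooth elliptic and irreducible.

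A final Euler-number check $e(W) = 3 + 10 = 13 = 6 + 6 + 0 + (2 + 0 - 1)$ shows the four listed fibers exhaust the singular fibers of $h_1$. For the last assertion, $\B_3 \cdot \Gamma_i = 2$ for $i = 0, 1, 2$ follows from $\B_3 \equiv L - E_0 - E$, and transversality is obtained by analyzing how the line $\l_{p_0p}$ meets $\gamma_0, \gamma_1, \gamma_2$ in $\PP^2$ and invoking condition (III), which forces $p \notin \gamma_0$. The main obstacle is the identification of $\B_1$: its self-intersection $-1$ is atypical for a fiber component of an elliptic fibration, and the argument has to go through the contraction $\pi$, where the $(-1)$-anomaly disappears and the Euler-number bookkeeping on $W'$ forces $\pi(\B_1)$ to be a smooth fiber.
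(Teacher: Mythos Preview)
Your overall strategy matches the paper's---contract $E$, use an Euler-number count on the resulting relatively minimal fibration---but there are two genuine gaps.

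First, the Euler-number argument on $W'$ only shows that every fiber of $\bar h_1$ other than the two $I_0^*$ fibers has \emph{smooth support}; it does not by itself rule out multiple fibers. You have not excluded the possibility $\pi_*(\B_1+E) = mC$ for some smooth elliptic curve $C$ and $m \ge 2$, in which case the fiber on $W$ would be $m\tilde C + mE$ and $\B_1 = m\tilde C + (m-1)E$ would be neither irreducible nor reduced (note that $\B_1\cdot E=1$ holds for every $m$, so this computation does not help). The paper closes this gap by applying the canonical bundle formula on $W'$: writing $K_{W'} \equiv -\Gamma' + \Gamma_0' + \sum_j (m_j - 1)\Gamma_j'$ and using $\Gamma' \equiv -2K_{W'}$, $\Gamma_0' \equiv -K_{W'}$, one obtains $\sum_j (m_j-1)\Gamma_j' \equiv 0$, so $2\Gamma_0'$ is the \emph{only} multiple fiber. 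Only then does the Euler-number count force $\pi_*(\B_1+E)$ to be an honest smooth fiber. (Alternatively: since $\mathrm{Pic}(W')$ is torsion-free and $K_{W'}$ is primitive, $mC \equiv -2K_{W'}$ forces $m \mid 2$; and $m=2$ would give $C \equiv -K_{W'}$ with $C \ne \Gamma_0'$, contradicting the uniqueness of $\gamma_0$ in \lemref{lem:gamma0}.) Your ``final Euler-number check'' on $W$ already assumes $e(\B_1)=0$ and so is circular.

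Second, your transversality argument is a hand-wave, not a proof. Conditions (I)--(IV) do not obviously prevent the line $\l_{p_0p}$ from being tangent to $\gamma_0$ at $p_0$, or from being tangent to the conics $\gamma_1,\gamma_2$; the observation $p \notin \gamma_0$ from (III) is not enough. The paper avoids any plane-curve analysis by restricting $h_1$ to $\B_3$: this is a degree-$4$ morphism $\B_3 \to \PP^1$, so Riemann--Hurwitz gives $\deg R_h = 6$. Since each $\Gamma_j$ ($j=0,1,2$) appears with multiplicity $2$ in its fiber and $\B_3\cdot\Gamma_j = 2$ while $\B_3\cdot C_k=\B_3\cdot C_k'=0$, one has $\Gamma_j|_{\B_3} \le R_h$ with $\deg(\Gamma_j|_{\B_3})=2$; summing over $j$ already gives degree $6$, hence equality, and transversality follows.
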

\begin{proof}
Note that $\Gamma^2=0$ and $K_W\Gamma=0$. The adjunction formula yields $p_a(\Gamma)=1$.
From (2) and (3), one sees immediately that
$$2\Gamma_1+C_1+C_1'+C_2+C_2',\ 2\Gamma_2+C_3+C_3'+C_4+C_4',\ 2\Gamma_0$$
are connected, linearly equivalent to $\Gamma$ and that they are disjoint.\
Hence $|\Gamma|$ defines an elliptic fibration $h_1\colon W\rightarrow \mathbb{P}^1$, and (a) and (b) are clear.

Since $E\Gamma=E(-2K_W+2E)=0$, $E$ is contained in some singular fiber different from those in (a) and (b). We may write it
as $\B_1+E$ with $\B_1\ge 0$.

Note that $E$ is the unique $(-1)$-curve contained in the fiber of $h_1$.\ Denote by $\rho_1\colon W\rightarrow W_1$ the contraction of $E$ and by $h_1'\colon W_1 \rightarrow \mathbb{P}^1$ the elliptic fibration induced by $h_1$. Let $2\Gamma_0', m_1\Gamma_1', \dots, m_r\Gamma_r'$ be all the multiple fibers of $h_1'$, where $\Gamma_0':={\rho_1}(\Gamma_0)$. By applying the canonical bundle formula for $h_1'$ (see \cite[V (12.3) Corollary]{WBCPAV84}), we have
\begin{align*}
K_{W_1} & \equiv h_1'^*(\mathcal{O}_{\mathbb{P}^1}(-1))+\Gamma_0'+\sum_{j=1}^r(m_j-1)\Gamma_j'\\
            & \equiv -\Gamma'+\Gamma_0'+\sum_{j=1}^r(m_j-1)\Gamma_j',
\end{align*}
where $\Gamma':={\rho_1}(\textbf{a general fiber of $h_1$})$. Since $\Gamma'\equiv -2K_{W_1}$ and $\Gamma_0'\equiv -K_{W_1}$ we obtain
\[
\sum_{j=1}^r(m_j-1)\Gamma_j'\equiv 0.
\]
It means that $2\Gamma_0'$ is the only multiple fiber of $h_1'$
and so is $2\Gamma_0$ the only one of $h_1$.

We also have
$e(W_1)=\sum_{x \in \PP^1}e(\Gamma_x')$, and $e(\Gamma_x')\ge 0$ and $e(\Gamma_x')=0$ if and only if the support of $\Gamma_x'$ is an irreducible smooth elliptic curve (see \cite[III (11.4) Proposition]{WBCPAV84} and \cite[Lemma VI.4]{AB78}), where $\Gamma_x'$ is a fiber of $h_1'$ over a point $x \in \mathbb{P}^1$.

Note that $e(W_1)=12\chi(\O_{W_1})-K_{W_1}^2=12$, $e(\textbf{a fiber of type $I_0^*$})=6$ and $e(2\Gamma_0')=0$. Thus $e(\Gamma_{y}')=0$ for any fiber $\Gamma_{y}'$ of $h_1'$ different from the two fibers of type $I_0^*$ and $2\Gamma_0'$. Together with the fact that $2\Gamma_0'$ is the only multiple fiber of $h_1'$, we conclude that $\Gamma_{y}'$ is smooth and so is particularly $\B_1'$, where $\B_1':={\rho_1}(\B_1+E)$. Hence ${\rho_1}^*(\B_1')=\B_1+E$ and $\B_1$ is irreducible smooth. The singular fibers of $h_1$ are as required.

Consider $h:=h_1|_{\B_3} \colon \B_3 \rightarrow \PP^1$.
Then $\deg h=\B_3(-2K_W+2E)=4$ and thus $\deg R_{h}=6$, where $R_h$ is the ramification divisor of $h$.
Observe that $\Gamma_j|_{\B_3} \le R_{h}$ and $\deg(\Gamma_j|_{\B_3})=2$ for $j=0,1,2$.
Hence $R_{h}=\sum_{j=0}^2\Gamma_j|_{\B_3}$ and $\B_3$ intersects $\Gamma_j$ transversely.
\end{proof}

\begin{cor}\label{cor:-2KW}
$H^0(W, \O_W(-2K_W))=0$.
\end{cor}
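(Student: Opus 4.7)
The strategy is to compare $-2K_W$ with the class $\Gamma=-2K_W+2E$, for which Proposition \ref{prop:elliptic1} gives complete control.

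First, I would suppose for contradiction that there exists an effective divisor $D\in|-2K_W|$. Adding $2E$ gives $D+2E\in|-2K_W+2E|=|\Gamma|$. Since $|\Gamma|$ defines the elliptic fibration $h_1\colon W\to\PP^1$ (a genuine morphism), $|\Gamma|$ is base-point free, and because $\Gamma^2=0$ every member of $|\Gamma|$ is a fiber of $h_1$. Hence $D+2E$ equals some fiber $F$ of $h_1$.

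Next I would inspect, fiber by fiber, the coefficient of $E$. Proposition \ref{prop:elliptic1} lists all singular fibers: the two $I_0^\ast$ fibers $2\Gamma_1+C_1+C_1'+C_2+C_2'$ and $2\Gamma_2+C_3+C_3'+C_4+C_4'$, the double fiber $2\Gamma_0$, and the fiber $\B_1+E$. None of them contains $E$ with multiplicity $\ge 2$: the first three do not contain $E$ at all, and in $\B_1+E$ the curve $E$ appears with multiplicity exactly $1$ (note $\B_1\ne E$, since for instance $\B_1^2=-E\B_1=-1\cdot 1=-1$ is computed from $\B_1\equiv -2K_W+E$ but $\B_1\not\equiv E$ as classes in $\mathrm{Pic}(W)$). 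Smooth fibers obviously do not involve $E$ either. Therefore $F-2E$ is never effective, contradicting $F-2E=D\ge 0$.

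This contradiction forces $H^0(W,\O_W(-2K_W))=0$. The only step that requires any care is the justification that every element of $|\Gamma|$ is actually a fiber of $h_1$ (rather than a sum of fibers with some base locus): this follows immediately from $h_1$ being a morphism with $\Gamma^2=0$, i.e.\ from the fact that $|\Gamma|$ is precisely the pencil of fibers. The rest is a bookkeeping check against the list of fibers already established in Proposition \ref{prop:elliptic1}.
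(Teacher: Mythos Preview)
Your argument is correct. Both your proof and the paper's rest on Proposition~\ref{prop:elliptic1}, but the paper takes a shorter path: instead of adding $2E$ and checking every fiber of $|\Gamma|$, it adds only $E$ and uses that $\B_1\equiv -2K_W+E$ is an irreducible curve with $\B_1^2=-1$. Since an irreducible curve of negative self-intersection is the unique effective divisor in its linear equivalence class, any $D\in|-2K_W|$ would force $D+E=\B_1$, which is impossible because $\B_1$ is irreducible and distinct from $E$. Your route works just as well but requires the full enumeration of singular fibers, whereas the paper only needs the single output ``$\B_1$ is irreducible'' from Proposition~\ref{prop:elliptic1}(c).
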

\begin{proof}
It follows from $\B_1\equiv -2K_W+E$, $\B_1^2=-1$ and that $\B_1$ is irreducible.
\end{proof}

\begin{prop}\label{prop:elliptic2}
Let $\Lambda:=-2K_W+2\B_3$. Then $|\Lambda|$ defines an elliptic fibration $h_2 \colon W \rightarrow \PP^1$
with all the singular fibers as follows:
\begin{enumerate}[\upshape (a)]
    \item two reducible fibers of type $I_0^*$:
             $$2\Lambda_1+C_1+C_1'+C_3+C_3',\ 2\Lambda_2+C_2+C_2'+C_4+C_4';$$
    \item the double fiber $2\Lambda_0$, where $\Lambda_0 \equiv -K_W+\B_3$ is an irreducible smooth elliptic curve;
    \item a reducible fiber $\B_2+\B_3$,
           where $\B_2$ is an irreducible smooth elliptic curve such that
           $$\B_2\equiv -2K_W+\B_3,\ \B_2\B_3=1;$$
\end{enumerate}
Moreover, $E$ intersects $\Lambda_0, \Lambda_1$ and $\Lambda_2$ transversely.
\end{prop}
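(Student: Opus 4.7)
The plan is to imitate the proof of Proposition~\ref{prop:elliptic1}, with $\B_3$ now playing the role of $E$ and $\Lambda_1,\Lambda_2$ the role of $\Gamma_1,\Gamma_2$. Using $K_W^2=\B_3^2=K_W\B_3=-1$, I first record $\Lambda^2=0$ and $K_W\Lambda=0$, so adjunction gives $p_a(\Lambda)=1$. A direct computation in $\mathrm{Pic}(W)$ with the classes of $\Lambda_j,C_k,C_k'$ listed in Section~\ref{ratellsur} then shows
\[
2\Lambda_1+C_1+C_1'+C_3+C_3'\equiv \Lambda\equiv 2\Lambda_2+C_2+C_2'+C_4+C_4',
\]
and these two divisors are disjoint (one verifies $\Lambda_1\Lambda_2=0$ and that each $\Lambda_i$ is orthogonal to the $C_j,C_j'$ on the opposite side). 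Both are connected of type $\tilde D_4$, so $|\Lambda|$ is an elliptic pencil and defines the fibration $h_2\colon W\to\PP^1$, proving~(a).

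From $\B_3\cdot\Lambda=0$, the $(-1)$-curve $\B_3$ sits in some fiber, which I write as $\B_2+\B_3$ with $\B_2$ effective. Since the two $I_0^*$-fibers consist entirely of nodal curves, $\B_3$ is the unique $(-1)$-curve in any fiber of $h_2$; I contract it by $\rho_2\colon W\to W_2$, obtaining the relatively minimal $h_2'\colon W_2\to\PP^1$ with $K_{W_2}^2=0$, $\chi(\O_{W_2})=1$, and $e(W_2)=12$. Exactly as in Proposition~\ref{prop:elliptic1} the canonical bundle formula then yields
\[
\sum_j (m_j-1)F_j \equiv \tfrac{1}{2}\Lambda'
\]
for $\Lambda':=(\rho_2)_*\Lambda\equiv -2K_{W_2}$ and multiple fibers $m_jF_j$; passing to numerical equivalence forces $\sum(1-1/m_j)=1/2$, so there is exactly one multiple fiber, of multiplicity $2$, whose reduced support $\Lambda_0'$ satisfies $\Lambda_0'\equiv -K_{W_2}$. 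The Euler characteristic $12$ is exhausted by the two $I_0^*$-fibers, so every remaining fiber has $e=0$ and is smooth elliptic (possibly multiple); in particular $\Lambda_0'$ and $(\rho_2)_*(\B_2+\B_3)$ are.

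Pulling back to $W$: fiber disjointness forces $\Lambda_0'$ to miss $\rho_2(\B_3)$ (otherwise $\B_3$ would lie in two distinct fibers of $h_2$), so $\Lambda_0:=\rho_2^*\Lambda_0'$ is the strict transform, irreducible smooth elliptic, of class $-K_W+\B_3$; this gives~(b). For~(c), $(\rho_2)_*(\B_2+\B_3)$ is a smooth elliptic curve through $\rho_2(\B_3)$ of multiplicity $\B_2\B_3=1$, so $\B_2$ is its strict transform, smooth and irreducible; its class $-2K_W+\B_3$ and $\B_2\B_3=1$ follow from $\B_2+\B_3\equiv\Lambda$. Finally, the transversality of $E$ against $\Lambda_0,\Lambda_1,\Lambda_2$ is a Riemann--Hurwitz count for $h:=h_2|_E\colon E\to\PP^1$: $\deg h=E\cdot\Lambda=4$, so $\deg R_h=6$; since each $\Lambda_j$ appears with multiplicity $2$ in its fiber, the degree-$2$ divisor $\Lambda_j|_E$ is dominated by $R_h$, and $2+2+2=6$ forces equality everywhere, i.e.\ transverse intersection.

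The one genuinely non-routine step, and the obstacle I expect to be the hardest, is producing $\Lambda_0$. In Proposition~\ref{prop:elliptic1} the analogous $\Gamma_0$ was already supplied by \lemref{lem:gamma0} as the strict transform of the cubic $\gamma_0$; here $\Lambda_0$ has no a priori source on $\PP^2$ and must be extracted purely from the fibration itself via the canonical bundle formula together with fiber disjointness.
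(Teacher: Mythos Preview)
Your overall strategy matches the paper's proof of Proposition~\ref{prop:elliptic2} very closely: adjunction, the two $I_0^*$ members, contraction of $\B_3$, canonical bundle formula to pin down a single double fiber, and the Euler-number count to force smoothness of the remaining fibers, followed by the Riemann--Hurwitz argument on $E$. All of that is fine.

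There is, however, one genuine gap---precisely at the step you flagged as non-routine. Your sentence ``fiber disjointness forces $\Lambda_0'$ to miss $\rho_2(\B_3)$ (otherwise $\B_3$ would lie in two distinct fibers of $h_2$)'' is circular. If $\Lambda_0'$ passed through $\rho_2(\B_3)$, the conclusion would not be that $\B_3$ lies in two distinct fibers; rather, the unique fiber through $\B_3$ would \emph{be} the double fiber, i.e.\ $\B_2+\B_3=2\Lambda_0$ as divisors. Nothing you have written excludes this. Concretely, if the fiber of $h_2'$ over $h_2'(\rho_2(\B_3))$ were $2\Lambda_0'$ with $\Lambda_0'$ smooth through $\rho_2(\B_3)$, then pulling back gives $\B_3$ with multiplicity at least~$2$ in the fiber of $h_2$, so $\B_2\ge\B_3$ and hence $-2K_W\equiv\B_2-\B_3$ would be effective. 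The paper closes this gap by invoking \corref{cor:-2KW}, i.e.\ $H^0(W,\O_W(-2K_W))=0$, which immediately rules out both $\B_2\ge\B_3$ and $\Lambda_0\ge\B_3$. (Alternatively, condition~(III) gives $|-K_W|=\emptyset$, which also suffices.) Once you insert this, your proof is complete and essentially identical to the paper's.
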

\begin{proof}
Since $\Lambda^2=0$ and $K_W\Lambda=0$, the adjunction formula gives $p_a(\Gamma)=1$.
One sees immediately that the two divisors
$$2\Lambda_1+C_1+C_1'+C_3+C_3',\ 2\Lambda_2+C_2+C_2'+C_4+C_4'$$
are connected, pairwise disjoint and contained in the linear system $|\Lambda|$.
Hence $|\Lambda|$ induces an elliptic fibration $h_2\colon W\rightarrow \mathbb{P}^1$.
The assertion (a) is clear.

Since $\B_3\Lambda=\B_3(-2K_W+2\B_3)=0$, $\B_3$ is contained in some singular fiber of $h_2$, which we may write it
as $\B_2+\B_3$ with $\B_2 \ge 0$. Then $\B_2\equiv -2K_W+\B_3, \B_2\B_3=1$ and $\B_2 \not\ge \B_3$ by \lemref{cor:-2KW}. Then the main statement is that $\overline{B_2}$ is irreducible and smooth.

Note that $\B_3$ is the unique $(-1)$-curve contained in the fiber of $h_2$. Denote by $\rho_2\colon W\rightarrow W_2$ the contraction of $\B_3$ and by $h_2'\colon W_2 \rightarrow \mathbb{P}^1$ the elliptic fibration induced by $h_2$. Let $n_0\Lambda_0', n_1\Lambda_1', \dots, n_s\Lambda_s'$ be all the multiple fibers of $h_2'$. By applying the canonical bundle formula for $h_2'$ (see \cite[V (12.3) Corollary]{WBCPAV84}), we have
\[
K_{W_2}  \equiv h_2'^*(\mathcal{O}_{\mathbb{P}^1}(-1))+\sum_{j=0}^s(n_j-1)\Lambda_j'\equiv -\Lambda'+\sum_{j=0}^s(n_j-1)\Lambda_j',
\]
where $\Lambda':={\rho_2}(\textbf{a general fiber of $h_2$})$. Since $\Lambda'\equiv -2K_{W_2}$ we get
\[
\sum_{j=0}^s\frac{n_j-1}{n_j}=\frac{1}{2}.
\]
It implies $s=0$ and $n_0=2$. So $h_2$ has the unique multiple fiber which is a double fiber,
because so does $h_2'$. Denote it by $2\Lambda_0$.
Then $\Lambda_0\equiv -K_W+\B_3$ and $\Lambda_0 \not\ge \B_3$ by \lemref{cor:-2KW}.

We also have
$e(W_2)=\sum_{x \in \PP^1}e(\Lambda_x')$, and $e(\Lambda_x')\ge 0$ and $e(\Lambda_x')=0$ if and only if the support of $\Lambda_x'$ is an irreducible smooth elliptic curve (see \cite[III (11.4) Proposition]{WBCPAV84} and \cite[Lemma VI.4]{AB78}), where $\Lambda_x'$ is a fiber of $h_2'$ over a point $x \in \mathbb{P}^1$.

Note that $e(W_2)=12\chi(\O_{W_2})-K_{W_2}^2=12$, $e(\textbf{a fiber of type $I_0^*$})=6$ and $e(2\Lambda_0')=0$. Thus $e(\Lambda_{y}')=0$ for any fiber $\Lambda_{y}'$ of $h_2'$ different from the two fibers of type $I_0^*$ and $2\Lambda_0'$. Together with the fact that $2\Lambda_0'$ is the only multiple fiber of $h_2'$, we conclude that $\Gamma_{y}'$ is smooth and so is particularly $\B_2'$, where $\B_2':={\rho_2}(\B_2+\B_3)$. Hence ${\rho_2}^*(\B_2')=\B_2+\B_3$ and $\B_2$ is irreducible smooth. The singular fibers of $h_2$ are as required.

Consider $h':=h_2|_{E} \colon E \rightarrow \PP^1$.
Then $\deg h'=E(-2K_W+2\B_3)=4$ and thus $\deg R_{h'}=6$, where $R_{h'}$ is the ramification divisor of $h'$.
Observe that $\Lambda_j|_{E} \le R_{h'}$ and $\deg(\Lambda_j|_{E})=2$ for $j=0,1,2$.
Hence $R_{h'}=\sum_{j=0}^2\Lambda_j|_{E}$ and $E$ intersects $\Lambda_j$ transversely.
\end{proof}

\begin{rem}\label{rem:notcubic}
From Proposition \ref{prop:elliptic2}, we see that
\begin{itemize}
    \item[i)] $\Lambda_0 \equiv -K_W+\B_3 \equiv 4L-2E_0-E_1-E_1'-E_2-E_2'-E_3-E_3'-E_4-E_4'-2E.$
Thus $\P \subseteq \CC\PP^2$ satisfies the following property:
	\begin{itemize}
  	  \item[\upshape (IV')] there is an irreducible quartic $\lambda_0$ passing through the eight points\\
                    $p_1, p_1', p_2, p_2', p_3, p_3', p_4, p_4'$ and having double points at $p_0, p$.
	\end{itemize}
    \item[ii)] The singular point $p$ of $\lambda_1$ (resp. $\lambda_2$)
           is indeed a node.
   \item[iii)] $\B_1\equiv 6L-2E_0-2\sum_{k=1}^4(E_k+E_k')-E$.\ Thus $\P$ satisfies that there is an irreducible sextic $\rho(\B_1)$ passing through the point $p$, having the double point at $p_0$ and $[2,2]$-points at $p_k$ for $k=1,2,3,4$.
	 \item[iv)] $\B_2 \equiv 7L-3E_0-2\sum_{k=1}^4(E_k+E_k')-3E$.\ Thus $\P$ satisfies that there is an irreducible septic $\rho(\B_2)$ having  triple points at $p_0, p$ and $[2,2]$-points at $p_k$ for $k=1,2,3,4$.
\end{itemize}
It is much harder to find $\lambda_0$, $\rho(\B_1)$ or $\rho(\B_2)$ directly than to find the cubics $\gamma_0, \lambda_1, \lambda_2$ in the condition (IV) and Lemma \ref{lem:gamma0}. 
\end{rem}

 \begin{lem}\label{lem:snc}
  The divisor $\B_1+\B_2+\B_3$ is simple normal crossing with $\B_1\B_2=1, \B_1\B_3=3$ and $\B_2\B_3=1$.
 \end{lem}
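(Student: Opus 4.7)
The plan is to verify four ingredients in sequence.

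First, the three intersection numbers follow by direct expansion from the class formulas $\B_1\equiv-2K_W+E$, $\B_2\equiv-2K_W+\B_3$, $\B_3\equiv L-E_0-E$, combined with $K_W^2=-1$, $E^2=\B_3^2=-1$, $E\cdot\B_3=1$, and $K_W\cdot E=K_W\cdot\B_3=-1$. Smoothness of the three components is immediate: $\B_1$ and $\B_2$ are irreducible smooth elliptic curves by \propref{prop:elliptic1} and \propref{prop:elliptic2}, while $\B_3$ is a $(-1)$-curve by \lemref{lem:rational}.

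For pairwise transversality, the cases $\B_1\cap\B_2$ and $\B_2\cap\B_3$ are automatic: each consists of a single reduced point since the intersection number equals $1$. For $\B_1\cap\B_3$ I would reuse the Riemann--Hurwitz argument from the proof of \propref{prop:elliptic1}: the map $h_1|_{\B_3}\colon\B_3\to\PP^1$ has degree $4$, so its ramification divisor has degree $6$, and this is already exhausted by $\sum_{j=0}^{2}\Gamma_j|_{\B_3}$. Hence $h_1|_{\B_3}$ is étale over the point of $\PP^1$ corresponding to the fiber $\B_1+E$, so $\B_3\cap(\B_1+E)$ consists of four distinct transverse points. Since $\B_3\cdot E=1$ and $\B_3\cdot\B_1=3$, these decompose as one point on $E$ and three on $\B_1$, all distinct from the node $\B_1\cap E$; in particular $\B_1\cap\B_3$ is three distinct transverse points.

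The main obstacle is then ruling out a triple point $\B_1\cap\B_2\cap\B_3\ne\varnothing$. As a first step I would apply the symmetric Riemann--Hurwitz analysis to $h_2|_E\colon E\to\PP^1$ (as in the proof of \propref{prop:elliptic2}), which shows that $E\cap(\B_2+\B_3)$ is four distinct transverse points and in particular the node $\B_2\cap\B_3$ does not lie on $E$; combined with the previous paragraph this excludes every triple point involving $E$. A putative triple point in $\B_1\cap\B_2\cap\B_3$ would force the unique point of $\B_1\cap\B_2$ to coincide with the unique point of $\B_2\cap\B_3$, which, pushed down via $\rho$, is equivalent to the unique residual intersection of the sextic $\rho(\B_1)$ and the septic $\rho(\B_2)$ lying on the line $\l_{p_0p}$. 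I expect this to be the hardest step, because numerical invariants alone (for instance $p_a(\B_1+\B_2+\B_3)=5$) are consistent both with the simple-normal-crossings case and with one triple point plus two nodes. I would attempt to rule it out by an explicit coordinate computation using the parametrization of $\P$ in Section~\ref{confpoints}, exploiting that $\rho(\B_1)$ and $\rho(\B_2)$ are uniquely determined up to scalar by their prescribed base conditions.
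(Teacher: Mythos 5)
Your computation of the intersection numbers is correct, and your treatment of the pairwise transversality is sound. For $\B_1\cap\B_3$ you restrict the elliptic fibration $h_1$ to $\B_3$ and reuse the ramification count from the end of the proof of \propref{prop:elliptic1}; the paper instead restricts the rational fibration $f$ of \lemref{lem:rational} to $\B_1$ (degree $4$, so $\deg R_{f|_{\B_1}}=8$, exhausted by $\sum_{k=1}^4 E_k'|_{\B_1}$, each of degree $2$), but these are the same Hurwitz-type argument applied to different fibrations and either version works. Your paragraph excluding triple points involving $E$ is harmless but unnecessary, since $E$ is not a component of the divisor in the statement.

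The genuine gap is exactly where you flag it: you do not rule out a point of $\B_1\cap\B_2\cap\B_3$, and the fallback you propose --- an explicit coordinate computation with $\rho(\B_1)$ and $\rho(\B_2)$ --- is not realistic, because this sextic and septic are never exhibited by equations anywhere; they are produced only abstractly as fiber components, and Remark~\ref{rem:notcubic} emphasizes that finding them directly is precisely what the paper's setup is designed to avoid. The paper closes the gap with a third ramification count, this time on $\B_2$. Set $\Psi:=\B_1+\B_3\equiv\B_2+E\equiv -2K_W+F$. Since $H^1(W,\O_W(E))=0$ and $\Psi\B_2=2$, the trace of $|\Psi|$ on $\B_2$ is a complete, base-point-free $g^1_2$, inducing a double cover $g'\colon\B_2\rightarrow\PP^1$ with $\deg R_{g'}=4$. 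Using the fibers $F\equiv C_k+2E_k'+C_k'$ of $f$, one writes $\Psi\equiv 2(-K_W+E_k')+C_k+C_k'$, and Riemann--Roch yields effective divisors $\Psi_k\equiv -K_W+E_k'$; as $C_k,C_k'$ are disjoint from $\B_2$, each $\Psi_k|_{\B_2}$ (of degree $1$) is a ramification point of $g'$, and the degree count forces $R_{g'}=\sum_{k=1}^4\Psi_k|_{\B_2}$. Consequently the fiber $(\B_1+\B_3)|_{\B_2}$ of $g'$ is reduced, i.e.\ $\B_1\cap\B_2$ and $\B_3\cap\B_2$ are distinct points, which is exactly the exclusion of a triple point. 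You would need to supply this argument (or an equivalent one) for your proof to be complete.
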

 \begin{proof}
 The intersection numbers follow by a direct calculation.
 It is sufficient to show that $\B_1$ intersects with $\B_3$ transversely, and $\B_1,\B_2,\B_3$ have no common point.

 First consider $f':=f|_{\B_1}\colon \B_1 \rightarrow \mathbb{P}^1$. Then $\deg f'=4$ and so $\deg R_{f'}=8$ by Hurwitz's formula, where $R_{f'}$ is the ramification divisor of $f'$. By Lemma \ref{lem:rational},
 $R_{f'}\ge \sum_{k=1}^4 E_k'|_{\B_1}$ and indeed the equality holds because both sides have the same degree.
It means $\B_1$  intersects the fiber $E+\B_3$ of $f$ transversely.

Now let
\[
\Psi:=\B_1+\B_3 \equiv \B_2+E
\]
(see Propositions \ref{prop:elliptic1} and \ref{prop:elliptic2}).
Since $H^1(W,\O_W(E))=0$ and $\Psi\B_2=2$ the short exact sequence \[
0\rightarrow \mathcal{O}_W(E) \rightarrow \mathcal{O}_W(\B_2+E) \rightarrow \mathcal{O}_{\B_2}(\B_2+E)\rightarrow 0
\]
and Riemann-Roch theorem on $\B_2$ show that the trace of $|\Psi|$ on $\B_2$ is complete and base point free.
Denote by $g'\colon \B_2\rightarrow \mathbb{P}^1$ the double cover induced by $|\Psi|_{\B_2}|$. Then $\deg g'=2$ and $\deg R_{g'}=4$, where $R_{g'}$ is the ramification divisor of $g'$.

Since $F\equiv E+\B_3$
\begin{equation}\label{B1+B3Dk}
\Psi=\B_1+\B_3  \equiv -2K_W+F\equiv 2(-K_W+E_k')+C_k+C_k'
\end{equation}
for any $k=1,2,3,4$. By Riemann-Roch theorem
\[
h^0(W,\mathcal{O}_W(-K_W+E_k'))\ge 1
\]
because $(-K_W+E_k')^2=0$ and $K_W(-K_W+E_k')=0$. Thus
\[
-K_W+E_k'\equiv \Psi_k \textrm{ for some effective divisor $\Psi_k$ on $W$.}
\]
Hence $ R_{g'}\ge \sum_{k=1}^4 \Psi_k|_{\B_2}$ and indeed the equality holds because both sides have the same degree.
It means that $(\B_1+\B_3)|_{\B_2}$ consists of two distinct points.
\end{proof}

\begin{lem}\label{lem:B2F}
The divisor $\B_2+F$ is nef with $(\B_2+F)^2=7$. If $C$ is an irreducible curve such that $(\B_2+F)C=0$,
then $C$ is one of the nodal curves $C_1, C_1', \ldots, C_4, C_4'$.
\end{lem}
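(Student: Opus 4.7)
First, verify $(\B_2+F)^2=7$ directly from $\B_2\equiv -2K_W+\B_3$ (\propref{prop:elliptic2}) and $F=L-E_0$. Then establish nefness by splitting $\B_2+F$: the class $F$ is nef as the fiber class of $f$, and $\B_2$ is irreducible, so $\B_2\cdot C\ge 0$ for every irreducible $C\ne \B_2$; this handles all curves except $C=\B_2$ itself, and that exceptional case follows from $(\B_2+F)\cdot \B_2=\B_2^2+\B_2\cdot F$. Finally, classify the zero locus by first using $F\cdot C=0$ to cut down to components of fibers of $f$ (enumerated in \lemref{lem:rational}) and then testing each candidate against $\B_2$.

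For the self-intersection, I would compute $K_W\cdot \B_3=-1$ from $\B_3\equiv L-E_0-E$ and $-K_W\equiv 3L-E_0-\sum(E_k+E_k')-E$, use $K_W^2=-1$, and $F\cdot K_W=-2$ (since $F$ is a smooth rational fiber). These give $\B_2^2=4K_W^2-4K_W\cdot \B_3+\B_3^2=-1$, $F^2=0$, and $\B_2\cdot F=-2F\cdot K_W+F\cdot \B_3=4$, so $(\B_2+F)^2=-1+8+0=7$.

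For the classification of zero-intersection curves, if $C$ is irreducible with $(\B_2+F)\cdot C=0$, the nonnegativity just proved forces both $F\cdot C=0$ and $\B_2\cdot C=0$. The first equation means $f(C)$ is a point, so by \lemref{lem:rational} the candidates for $C$ are a general fiber (class $F$), the components $E$ and $\B_3$ of $E+\B_3$, or the components $C_k$, $E_k'$, $C_k'$ of $C_k+2E_k'+C_k'$ for $k=1,\dots,4$. A short case check using $\B_2\equiv -2K_W+\B_3$ gives $\B_2\cdot F=4$, $\B_2\cdot E=-2K_W\cdot E+\B_3\cdot E=2+1=3$, $\B_2\cdot \B_3=1$, $\B_2\cdot E_k'=-2K_W\cdot E_k'+0=2$, while $C_k$ and $C_k'$ are nodal $(-2)$-curves with $K_W\cdot C_k=K_W\cdot C_k'=0$ and both are disjoint from $\B_3$, yielding $\B_2\cdot C_k=\B_2\cdot C_k'=0$. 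Hence only the eight nodal curves $C_1,C_1',\dots,C_4,C_4'$ appear in the zero locus.

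I do not anticipate any real obstacle: the entire argument is routine bookkeeping once one notices that nefness of $F$ together with irreducibility of $\B_2$ reduces the nef check to the single case $C=\B_2$, and that $F\cdot C=0$ confines the search for zero-intersection curves to the short explicit list of fiber components from \lemref{lem:rational}.
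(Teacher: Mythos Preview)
Your proof is correct and follows essentially the same approach as the paper: use nefness of $F$ together with irreducibility of $\B_2$ to reduce nefness to the single case $C=\B_2$ (where $(\B_2+F)\cdot\B_2=3$), and then use $F\cdot C=0$ to restrict to the fiber components listed in \lemref{lem:rational} and test each against $\B_2$. The paper's proof is terser but identical in substance; your explicit intersection computations are all correct.
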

\proof Assume that $C$ is an irreducible curve such that $(\B_2+F)C \le 0$.
Since $(\B_2+F)\B_2=3$ and $F$ is nef, we conclude that $C \not= \B_2$ and $\B_2C=FC=0$.
This means $C$ is contained in singular fibers of $f$ and $(-2K_W+\B_3)C=0$.
The conclusion follows from \lemref{lem:rational}. \qed

\begin{lem}\label{lem:bir}
The linear system $|\B_2+F|$ defines a birational morphism $\phi\colon W\rightarrow \mathbb{P}^4$ which contracts exactly eight $(-2)$-curves $C_k,C_k'$ for $k=1,2,3,4$.
\end{lem}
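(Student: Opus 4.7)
The plan is to compute $h^0(W,\O_W(\B_2+F))$, establish base-point freeness of $|\B_2+F|$, and then deduce birationality together with the exact contraction locus.

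First I would count sections. Riemann--Roch yields
\[
\chi(\O_W(\B_2+F)) \;=\; 1 + \tfrac{1}{2}\bigl((\B_2+F)^2 - K_W\cdot(\B_2+F)\bigr) \;=\; 1 + \tfrac{1}{2}(7-(-1)) \;=\; 5.
\]
For $H^2$, Serre duality reduces the claim to the non-effectivity of $K_W-\B_2-F$; since $\B_2+F$ is nef by \lemref{lem:B2F} and $(K_W-\B_2-F)(\B_2+F)=-8<0$, the class admits no sections. For $H^1$, I would use the restriction sequence
\[
0 \to \O_W(F) \to \O_W(\B_2+F) \to \O_{\B_2}\bigl((\B_2+F)|_{\B_2}\bigr) \to 0.
\]
The right-hand term has degree $(\B_2+F)\cdot\B_2 = -1+4 = 3$ on the smooth elliptic curve $\B_2$, whose $H^1$ vanishes. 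For $H^1(W,\O_W(F))$, Riemann--Roch gives $\chi(F)=2$, and since $|F|$ is the pencil defining the rational fibration $f$ from \lemref{lem:rational} we have $h^0(F)=2$, forcing $h^1(F)=0$. Combining these, $h^0(W,\O_W(\B_2+F))=5$.

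Second I would establish base-point freeness. By \lemref{lem:rational}, $F \equiv C_k + 2E_k' + C_k'$ for each $k=1,2,3,4$, producing four effective divisors
\[
D_k \;=\; \B_2 + C_k + 2E_k' + C_k' \;\in\; |\B_2+F|.
\]
Since the curves $C_j,E_j',C_j'$ for different indices $j$ are mutually disjoint, set-theoretically $\bigcap_{k=1}^4 D_k = \B_2$, so the base locus of $|\B_2+F|$ is contained in $\B_2$. The same exact sequence, together with $H^1(W,\O_W(F))=0$, shows that the restricted linear system $\bigl||\B_2+F|\bigr|_{\B_2}$ is complete of degree $3$ on the elliptic curve $\B_2$, hence base-point free. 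Therefore $|\B_2+F|$ has no base points and defines a morphism $\phi\colon W \to \PP^4$.

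Finally, for birationality and the contraction locus, set $d := \deg\phi$ and $\delta := \deg\phi(W)$. The five sections being linearly independent force $\phi(W)$ to span $\PP^4$, so $\phi(W)$ is a non-degenerate irreducible surface and $\phi$ is generically finite with $d\delta = (\B_2+F)^2 = 7$. A non-degenerate irreducible surface in $\PP^4$ has degree at least $3$, so $\delta\ge 3$; combined with $d\delta = 7$ prime, this forces $d=1$ and $\phi$ is birational. An irreducible curve $C\subset W$ is contracted by $\phi$ precisely when $(\B_2+F)\cdot C = 0$, and \lemref{lem:B2F} identifies these as exactly the eight nodal curves $C_1,C_1',\dots,C_4,C_4'$. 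The main technical hurdle is the vanishing of $H^1(W,\O_W(\B_2+F))$, which I route through the restriction sequence on $\B_2$ rather than attempting a direct Kawamata--Viehweg argument, since $\B_2+F-K_W$ is not visibly nef on every $(-1)$-curve of $W$.
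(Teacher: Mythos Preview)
Your proof is correct and follows essentially the same route as the paper: both arguments hinge on the restriction sequence to $\B_2$ together with $H^1(W,\O_W(F))=0$, observe that the degree-$3$ line bundle on the elliptic curve $\B_2$ is base-point free with three sections, and conclude birationality from the primality of $(\B_2+F)^2=7$ and the description of the contracted curves from \lemref{lem:B2F}. The paper is slightly more streamlined in two places: it reads off $h^0(\B_2+F)=2+3=5$ directly from the exact sequence rather than computing $\chi$ and then separately killing $h^1$ and $h^2$, and it handles base-point freeness away from $\B_2$ by invoking the base-point-free pencil $|F|$ rather than your four explicit singular fibers $D_k$; but these are cosmetic differences, not different ideas.
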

\begin{proof}
Note that $H^1(W,\O_W(F))=0$. Consider the exact sequence
\[
0\rightarrow H^0(W,\O_W(F)) \rightarrow H^0(W,\O_W(\B_2+F)) \rightarrow H^0(\B_2,\O_{\B_2}(\B_2+F))\rightarrow 0.
\]
Since $\deg(\O_{\B_2}(\B_2+F))=3\ge2g(\B_2)=2$, $|\O_{\B_2}(\B_2+F)|$ is base point free and of dimension $2$. And since $|F|$ is a base-point-free pencil, $|\B_2+F|$ is base point free and of dimension $4$. Because $(\B_2+F)^2$ is the prime number $7$, the morphism $\phi \colon W \rightarrow \mathbb{P}^4$ is birational.

By Lemma \ref{lem:B2F}, $\phi$ only contracts the nodal curves $C_k,C_k'$ for $k=1,2,3,4$.
\end{proof}

\section{Construction of surfaces of general type}\label{construction}
In this section we construct a $2$-dimensional family (see Remark \ref{rem:twodim}) of minimal surfaces of general type with $p_g=0$ and $K^2=7$, having commuting involutions. Those surfaces belong to the case $(c)$ of \cite[Theorem 1.1]{YC15} (see Proposition \ref{prop:intermediate}).

We define
\begin{equation}\label{Deltai}
\begin{array}{rl}
\Delta_{1}\!\!\!\!&:=\B_1+C_{1}+C_{1}'+C_{2}+C_{2}',\\
\Delta_{2}\!\!\!\!&:=\B_2+C_{3}+C_{3}',\\
\Delta_{3}\!\!\!\!&:=\B_3+C_{4}+C_{4}'.
\end{array}
\end{equation}
Then
\begin{align*}
\Delta_{1}&\equiv -2K_W+2L-2E_0-2E_1'-2E_2'+E,\\
\Delta_{2}&\equiv -2K_W+2L-2E_0-2E_3'-E,\\
\Delta_{3}&\equiv 2L-2E_0-2E_4'-E.
\end{align*}
Let $\Delta:=\Delta_1+\Delta_2+\Delta_3$. Since $\B_1, \B_2, \B_3$ are disjoint from the
nodal curves $C_1, \ldots, C_4'$, $\Delta$ is simple normal crossing by \lemref{lem:snc}.

Note that $\mathrm{Pic}(W)$ is torsion-free. For $i=1,2,3$, there exists a unique $\L_{i} \in \mathrm{Pic}(W)$ such that
\begin{align*}
2\L_{i}\equiv \Delta_{i+1}+\Delta_{i+2},\ \ \L_{i}+\Delta_{i} \equiv \L_{i+1}+\L_{i+2},
\end{align*}
where
\begin{equation}
	\begin{array}{rl}\label{Li}
	\L_{1}\!\!\!\!\! &:=-K_W+2L-2E_0-E_3'-E_4'-E,\\
	\L_{2}\!\!\!\!\! &:=-K_W+2L-2E_0-E_1'-E_2'-E_4',\\
	\L_{3}\!\!\!\!\! &:=-2K_W+2L-2E_0-E_1'-E_2'-E_3',
	\end{array}
\end{equation}
and the index $i\in \{1,2,3\}$ is considered as modulo $3$.

There is a smooth bidouble cover $\overline{\pi} \colon V \rightarrow W$ branched along $\Delta$ (see \cite[Section 1]{FC84}, \cite[Theorem 2]{FC99} and  \cite{Pardini91}).
Note that $\overline{\pi}^{-1}(C_k)$ (repectively $\overline{\pi}^{-1}(C_k'))$
are disjoint union of two $(-1)$-curves for $k=1,2,3,4$.
Let $\epsilon \colon V \rightarrow S$ be the blowdown of these sixteen $(-1)$-curves. We have a commutative diagram:
\begin{displaymath}
\xymatrix{
  V  \ar[r]^{\epsilon}  \ar[d]_{\bpi} & S  \ar[d]^{\pi}\\
  W \ar[r]^{\eta}                                 & \Sigma }
\end{displaymath}
where $\eta$ is a contraction of eight $(-2)$-curves $C_k, C_k'$ for $k=1,2,3,4$, and $\pi$ is a bidouble cover $\pi \colon S \rightarrow \Sigma$ branched along $\eta(\Delta)$ and the eight nodes of $\Sigma$. Remark the notation $W$ is different from ones of \cite{CCM07, CMLP08,YLYS14}.
\begin{thm}\label{thm:mainthm}
The surface $S$ is a minimal surface of general type with $p_g(S)=0$ and $K^2_S=7$. Moreover, the canonical divisor $K_S$ is ample.
\end{thm}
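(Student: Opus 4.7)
The plan is to combine the standard smooth bidouble cover formulas for numerical invariants with Lemma~\ref{lem:B2F} and the Nakai--Moishezon criterion for ampleness. Everything transports between the cover $V$ and the base $W$ through the decompositions $\bpi_{*}\O_{V} \cong \O_{W} \oplus \bigoplus_{i=1}^{3}\O_{W}(-\L_{i})$ and $\bpi_{*}\omega_{V} \cong \omega_{W} \oplus \bigoplus_{i=1}^{3}\omega_{W}(\L_{i})$, together with $K_{V} \equiv \bpi^{*}K_{W} + R_{1} + R_{2} + R_{3}$ where $2R_{i} = \bpi^{*}\Delta_{i}$.

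First I would pin down the numerical invariants. A direct calculation in the basis of $\mathrm{Pic}(W)$ from Section~\ref{ratellsur} yields $\L_{i}^{2} + K_{W}\L_{i} = -2$ for each $i$, so Riemann--Roch gives $\chi(\O_{W}(-\L_{i})) = 0$, hence $\chi(\O_{V}) = \chi(\O_{W}) + \sum_{i}\chi(\O_{W}(-\L_{i})) = 1$. Using $K_{V}^{2} = 4K_{W}^{2} + 4K_{W}\Delta + \Delta^{2}$ together with \lemref{lem:snc} and the fact that $C_{k}, C_{k}'$ are pairwise disjoint $(-2)$-curves disjoint from the $\B_{i}$, I find $K_{W}\Delta = 1$ and $\Delta^{2} = -9$, whence $K_{V}^{2} = -9$. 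Since $\epsilon$ contracts exactly sixteen $(-1)$-curves, $K_{S}^{2} = 7$ and $\chi(\O_{S}) = 1$.

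Next, $p_{g}(V) = \sum_{i}h^{0}(K_{W} + \L_{i})$, and I would show each summand vanishes. For $i = 1, 2$, any effective representative of $K_{W} + \L_{i}$ would be a pair of lines $\ell_{1} + \ell_{2}$ through $p_{0}$ that also passes through specified infinitely near points $p_{k}'$ and the point $p$ (respectively $p_{4}'$); condition~(I) on $\P$ then gives a contradiction. For $i = 3$, the divisor $K_{W} + \L_{3}$ would require a plane quintic with multiplicities $m_{p_{k}} = 1$ and $m_{p_{k}'} \ge 2$ at $p_{k}$ for $k = 1, 2, 3$, which is numerically impossible since always $m_{p_{k}'} \le m_{p_{k}}$; a dimension count over all admissible expansions $\tilde C' + \sum a_{k}(E_{k} - E_{k}') + c_{0}E_{0} + cE$ confirms the vanishing. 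Hence $p_{g}(V) = 0$ and therefore $q(V) = \chi(\O_{V}) - 1 + p_{g}(V) = 0$; since $\epsilon$ is birational, $p_{g}(S) = q(S) = 0$.

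The heart of the proof is the ampleness of $K_{S}$. Combining $K_{V}$, $R_{i} = \tfrac{1}{2}\bpi^{*}\Delta_{i}$, and $\sum_{j}e_{j} = \tfrac{1}{2}\bpi^{*}\bigl(\sum_{k}(C_{k} + C_{k}')\bigr)$ produces the linear equivalence
\[
2\epsilon^{*}K_{S} \;=\; 2\bigl(K_{V} - \textstyle\sum_{j}e_{j}\bigr) \;\equiv\; \bpi^{*}\bigl(2K_{W} + \B_{1} + \B_{2} + \B_{3}\bigr) \;\equiv\; \bpi^{*}(\B_{2} + F),
\]
using $\B_{1} \equiv -2K_{W} + E$ and $\B_{3} + E \equiv F$. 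By \lemref{lem:B2F}, $\B_{2} + F$ is nef on $W$ and has zero intersection with an irreducible curve only on $\{C_{1}, C_{1}', \ldots, C_{4}, C_{4}'\}$. For any irreducible $\tilde C \subset V$ different from the sixteen contracted $(-1)$-curves, $\bpi(\tilde C)$ is an irreducible curve on $W$ that cannot be any $C_{k}$ or $C_{k}'$ (else $\tilde C$ would be a component of $\bpi^{-1}(C_{k})$, hence one of the $e_{j}$), so $\bpi^{*}(\B_{2} + F) \cdot \tilde C > 0$ and therefore $K_{S} \cdot \epsilon(\tilde C) > 0$. Together with $K_{S}^{2} = 7 > 0$, Nakai--Moishezon yields $K_{S}$ ample, and minimality and general type follow at once. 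The main anticipated obstacle is the verification $h^{0}(K_{W} + \L_{i}) = 0$, particularly in the $i = 3$ case, which rests on the combinatorial bound $m_{p_{k}'} \le m_{p_{k}}$ together with the geometric conditions~(I)--(IV) on $\P$.
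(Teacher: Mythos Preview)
Your strategy matches the paper's proof almost exactly: both derive the key identity $2\epsilon^{*}K_{S}\equiv\bpi^{*}(\B_{2}+F)$, invoke \lemref{lem:B2F} for $K_{S}^{2}=7$ and ampleness, and reduce $p_{g}(S)=0$ to the three vanishings $h^{0}(W,K_{W}+\L_{i})=0$. The only substantive difference is in how the vanishing for $i=3$ is argued, and there your sketch has a gap.

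Your claim that ``$m_{p_{k}'}\le m_{p_{k}}$'' is the right heuristic, but it only applies literally to the strict transform of a plane curve; an arbitrary effective $\Phi\in|K_{W}+\L_{3}|$ may contain the nodal curves $C_{k},C_{k}'$ (and in fact must, since $\Phi\cdot C_{j}=\Phi\cdot C_{j}'=-1$ for $j=1,2,3$). Your proposed ``dimension count over all admissible expansions $\tilde C'+\sum a_{k}(E_{k}-E_{k}')+c_{0}E_{0}+cE$'' is not carried out, and as written it also omits the components $E_{k}'$ and the curves $C_{k}$. The paper's argument is cleaner and avoids any case analysis: from $\Phi\cdot C_{j}=\Phi\cdot C_{j}'=-1$ one gets $\Phi\ge\sum_{j=1}^{3}(C_{j}+C_{j}')$; then $\bigl(\Phi-\sum_{j=1}^{3}(C_{j}+C_{j}')\bigr)\cdot\Gamma_{1}=-1$ forces $\Phi\ge\sum_{j=1}^{3}(C_{j}+C_{j}')+\Gamma_{1}$; subtracting leaves the class $E_{3}'-E$, which is not effective. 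This use of the extra nodal curve $\Gamma_{1}$ is the step your outline is missing. The cases $i=1,2$ have the same issue in milder form (you must peel off $C_{3}',C_{4}'$ before invoking condition~(I)), but there the residual system is visibly empty, so your description is essentially correct once this is made explicit.
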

\begin{proof}
Note
\begin{align*}
\epsilon^*(2K_S)+\bpi^*\left(\sum_{k=1}^4(C_k+C_k')\right) & \equiv 2K_V= \bpi^*(2K_W+\Delta)  \textrm{ (see  \cite[Section 2]{FC99} and \cite{Pardini91})} \\
                  						 	    &\equiv \bpi^*(D)+\bpi^*\left(\sum_{k=1}^4(C_k+C_k')\right),
\end{align*}
where
\begin{align}\label{D}
D:=2K_W+\B_1+\B_2+\B_3\equiv \B_2+F.
\end{align}
It implies
\begin{align}\label{2KBF}
\epsilon^*(2K_S)\equiv \bpi^*(D).
\end{align}
By Lemma \ref{lem:B2F} $K_S^2=\frac{1}{4}(2K_S)^2=\frac{1}{4}4D^2=7$.

Since $p_g(W)=0$, by \cite[Section 2]{FC99}, we have
\begin{align*}
    p_g(S)=p_g(V)=p_g(W)+\sum_{i=1}^3h^0(W, \O_W(K_W+\L_i))=\sum_{i=1}^3h^0(W, \O_W(K_W+\L_i)).
\end{align*}
It is clear that
\begin{align*}
H^0(W,\mathcal{O}_W(K_W+\L_1))=H^0(W,\mathcal{O}_W(2L-2E_0-E_3'-E_4'-E))=0,\\
H^0(W,\mathcal{O}_W(K_W+\L_2))=H^0(W,\mathcal{O}_W(2L-2E_0-E_1'-E_2'-E_4'))=0.
\end{align*}

Assume $\Phi \in |K_W+\L_3|$.
For $j=1, 2, 3$, since $\Phi C_j=\Phi C_j'=-1$, $\Phi \ge C_j+C_j'$. Then
\[
\Phi-\sum_{j=1}^3(C_j+C_j') \ge 0.
\]
 Since $[\Phi-\sum_{j=1}^3(C_j+C_j')]\Gamma_1=-1$, $\Phi \ge \sum_{j=1}^3(C_j+C_j')+\Gamma_1$.
 But $\Phi-(\sum_{j=1}^3(C_j+C_j')+\Gamma_1) \equiv E_3'-E$, which is impossible.

Hence $|K_W+\L_3|=\emptyset$ and thus $p_g(S)=0$.
By Lemma \ref{lem:B2F} and (\ref{2KBF}), $K_S$ is ample.
\end{proof}

\section{The intermediate double covers and the bicanonical map}\label{intermediate}
Let $S$ be the surface constructed in Section \ref{construction}, $G$ be the Galois group $\mathbb{Z}_2\times \mathbb{Z}_2=\{\mathrm{Id}_S,g_1,g_2,g_3\}$ of the bidouble cover of Section \ref{construction}, $R_i$ be the divisorial fixed part of $g_i$ and $k_{i}$ be the number of the isolated fixed points of $g_i$ for each $i=1,2,3$. Denote by $S/g_i$ the quotient of $S$ by $g_i$.
\begin{prop}\label{prop:intermediate}
Let $S$ be the surface constructed in Section \ref{construction} and keep the notation as above. Then $R_i$ is irreducible and $R_i^2=-1$ for $i=1,2,3$, and
\begin{itemize}
	\item[i)] $K_SR_1=5$, $g(R_1)=3$ and $k_{1}=9$. Moreover $S/g_1$ is birational to an Enriques surface and $K_{S/g_1}^2=-2$.
	\item[ii)] $K_SR_2=3$, $g(R_2)=2$ and $k_{2}=7$. Moreover $S/g_2$ is birational to a minimal properly elliptic surface with $K_{S/g_2}^2=0$ and $\kappa(S/g_2)=1$.
	\item[iii)] $K_SR_3=1$, $g(R_3)=1$ and $k_{3}=5$. Moreover the minimal resolution $T$ of  $S/g_3$ is a minimal surface of general type with $p_g(T)=0$ and $K_{T}^2=2$ (i.e. a numerical Campedelli surface).
\end{itemize}
\end{prop}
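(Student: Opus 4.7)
The plan is to reduce everything to $W$ via the bidouble cover $\bpi:V\to W$ and the contraction $\epsilon:V\to S$. By the theory of smooth bidouble covers, the divisorial fixed locus $\widetilde R_i$ of $g_i$ on $V$ satisfies $\bpi^*\Delta_i=2\widetilde R_i$. The decomposition $\Delta_i=\B_i+\sum_k(C_k+C_k')$ writes $\widetilde R_i$ as the union of one irreducible smooth curve $\widetilde{\B}_i$ (a connected double cover of $\B_i$ branched over $\B_i\cap(\Delta_{i+1}+\Delta_{i+2})$) and a pair of disjoint $(-1)$-curves over each component $C_k$ or $C_k'$ of $\Delta_i$ (each an étale double cover of $\mathbb P^1$). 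All these $(-1)$-curves are among the sixteen exceptional curves contracted by $\epsilon$, so $R_i=\epsilon_*\widetilde R_i\cong\widetilde{\B}_i$ is irreducible.

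The numerical invariants then follow from intersection numbers on $W$. Using $\B_1\B_2=1$, $\B_1\B_3=3$, $\B_2\B_3=1$ from \lemref{lem:snc} and $\B_i\cdot C_k=\B_i\cdot C_k'=0$ (immediate from the Picard expressions in Section~\ref{ratellsur}), the branch counts $\B_i\cdot(\Delta_{i+1}+\Delta_{i+2})$ equal $4,2,4$, and Hurwitz gives $g(R_i)=3,2,1$. From $(\bpi^*\B_i)^2=4\B_i^2=-4$ one gets $R_i^2=-1$. The relation $\epsilon^*(2K_S)\equiv\bpi^*D$ (equation~(\ref{2KBF})) combined with the projection formula and $\bpi_*\widetilde{\B}_i=2\B_i$ gives $K_SR_i=D\cdot\B_i$, evaluating to $5,3,1$. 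The isolated fixed points decompose as $k_i=\Delta_{i+1}\cdot\Delta_{i+2}+\#\{(-1)\text{-curves in }\widetilde R_i\}$, giving $9,7,5$. Finally $K_{S/g_i}^2=\tfrac12(K_S-R_i)^2=3-K_SR_i=-2,0,2$, and $p_g(S/g_i)=q(S/g_i)=0$ is inherited from $p_g(S)=q(S)=0$.

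For the birational classification, the intermediate double cover $V/g_i\to W$ is associated to $\L_i$ and branched along $\Delta_{i+1}+\Delta_{i+2}$; its minimal desingularisation $\widetilde{V/g_i}$ is birational to the minimal resolution of $S/g_i$. Up to a blowup correction at the nodes of $\Delta_{i+1}+\Delta_{i+2}$, the plurigenera are
\[
P_m(\widetilde{V/g_i})=h^0(W,m(K_W+\L_i))+h^0(W,mK_W+(m-1)\L_i).
\]
Using $\B_2\equiv-2K_W+\B_3$ one obtains concrete effective representatives $2(K_W+\L_1)\equiv 2\B_3+\tilde C_3+\tilde C_4$ and $2(K_W+\L_2)\equiv(L-E_0)+\tilde C_1+\tilde C_2+\tilde C_4$, where $\tilde C_k=C_k+C_k'$. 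In case $i=2$ the moving pencil $|L-E_0|$ of lines through $p_0$ yields $P_2\ge 2$, ruling out $\kappa\le 0$; combined with $K^2=0$ this forces $\kappa=1$. In case $i=3$ we have $K^2=2$ and $p_g=q=0$, so the minimal resolution $T$ of $S/g_3$ is a numerical Campedelli surface provided it is minimal, which I would check by pulling candidate $(-1)$-curves of $T$ back through the normalised base change $\widetilde S=\operatorname{Bl}_5(S)\to T$ and exploiting the ampleness of $K_S$ from \thmref{thm:mainthm}.

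The main obstacle is case $i=1$. The computation above gives a rigid effective representative in $|2(K_W+\L_1)|$, hence $P_2(\widetilde{V/g_1})\ge 1$; but to conclude $\kappa=0$ and the Enriques classification, one must verify $P_2=1$ and that $P_m$ remains bounded in $m$. This requires a delicate analysis of the linear systems $|m(K_W+\L_1)|$ and $|mK_W+(m-1)\L_1|$, which in turn reduces to the incidence conditions at the infinitely near points $p_k'$ and at the node $p$ of $\lambda_1,\lambda_2$ — essentially the same analysis that underlies Section~\ref{confpoints}.
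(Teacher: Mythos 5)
Your treatment of the numerical data is correct and follows the same route as the paper: everything is computed on $W$ via $R_i^2=\B_i^2=-1$, $K_SR_i=D\B_i=(\B_2+F)\B_i$, adjunction (or Hurwitz) for $g(R_i)$, and $k_i=\Delta_{i+1}\Delta_{i+2}+\#\{(-1)\text{-curves over the }C\text{'s in }\Delta_i\}$. In fact your description of $R_i$ as a \emph{connected double cover} of $\B_i$ branched at the $\B_i(\Delta_{i+1}+\Delta_{i+2})=4,2,4$ points is more careful than the paper's own phrase ``$R_i$ is isomorphic to $\B_i$'' (which cannot be literally true, since $g(R_1)=3$ while $\B_1$ is elliptic); irreducibility follows exactly because these branch counts are positive. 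You have also isolated the two identities that drive the whole classification, $2(K_W+\L_1)\equiv 2\B_3+\sum_{k=3,4}(C_k+C_k')$ and $2(K_W+\L_2)\equiv F+\sum_{k=1,2,4}(C_k+C_k')$, which is precisely what the paper uses.

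The classification part, however, has genuine gaps. First, you leave case $i=1$ unfinished, anticipating a ``delicate analysis'' of $|m(K_W+\L_1)|$; none is needed. Since $2K_{V_1}\equiv\bpi_1^*\bigl(2\B_3+C_3+C_3'+C_4+C_4'\bigr)$ and the five curves $\B_3,C_3,C_3',C_4,C_4'$ are pairwise disjoint with self-intersections $-1,-2,-2,-2,-2$, this effective divisor is supported on a negative definite configuration, so $\kappa(V_1)=\kappa\bigl(W,2\B_3+\sum(C_k+C_k')\bigr)=0$ at once; with $p_g=q=0$ the Enriques conclusion follows from the classification of surfaces. Second, for $i=2$ the inference ``$P_2\ge 2$ together with $K^2=0$ forces $\kappa=1$'' is invalid: a non-minimal surface of general type can have $K^2=0$ (blow up once a minimal surface with $K^2=1$), and resolving the nodes of $S/g_2$ does not change $K^2$, so $K^2=0$ does not exclude $\kappa=2$. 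The correct upper bound again comes from your identity: since $\bpi_2$ is finite, $\kappa(V_2)=\kappa\bigl(W,F+\sum_{k=1,2,4}(C_k+C_k')\bigr)$, and $C_k+C_k'\equiv F-2E_k'$ lies in fibres of $f$, so this class is dominated by $4F$ and $\kappa(V_2)=\kappa(W,F)=1$. Third, for $i=3$ you only promise to verify minimality of $T$; the paper settles this by citing \cite[Proposition~2.10]{YC15}, and some actual argument (e.g.\ using the ampleness of $K_S$ and $(K_S-R_3)R_3=2$) must be supplied before statement (iii) is proved.
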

\begin{proof}
The intersection numbers $R_i^2$ and $K_SR_i$ follow from
\[R_i^2=\B_i^2, K_SR_i=D\B_i,\]
(see \cite[Lemma 2.4 (b)]{YC15}, and (\ref{D}) for $D$). Since $R_i$ is isomorphic to $\B_i$,
$R_i$ is irreducible because so is $\B_i$. Then $g(R_i)$ and $k_i$ follow from
the adjunction formula and \cite[Lemma 4.2]{DMP02}.

We see that $(S, G)$ is in the case (c) of \cite[Theorem~2.9]{YC15} and
statement (iii) is contained in  \cite[Proposition~2.10]{YC15}. It remains to calculate
$\kappa(S/g_1)$ and $\kappa(S/g_2)$. Note that $S/g_1$ and $S/g_2$ have only nodes as singularities.
\medskip

i) Let $\bpi_1\colon V_1\rightarrow W$ be the intermediate singular double cover of $\bpi\colon V\rightarrow W$ associated to the data $2\L_1\equiv \Delta_2+\Delta_3$. Note that $V_1$ contains four disjoint $(-1)$-curves $\bpi_1^{-1}(C_k), \bpi_1^{-1}(C_k')$ for $k=3,4$ and eight nodal curves
$\bpi_1^{-1}(C_j), \bpi_1^{-1}(C_j')$ for $j=1,2$.  Contract all these curves, and we obtain $S/g_1$.  Therefore $K_{S/g_1}^2=K_{V_1}^2+4$ and $\kappa(S/g_1)=\kappa(V_1)$. On the other hand
\begin{displaymath}
	\begin{array}{rl}
		2K_{V_1} \!\!\!\!&\equiv \bpi_1^*(2K_W+2\L_1) \equiv \bpi_1^*(2K_W+\B_2+\B_3+\sum_{k=3,4}(C_k+C_k'))  \textrm{ (see  (\ref{Deltai}) and (\ref{Li}))}\\
						       & \equiv \bpi_1^*(2\B_3)+\bpi_1^*(\sum_{k=3,4}(C_k+C_k')) \textrm{ (see Proposition \ref{prop:elliptic2} (c))}.
	\end{array}
\end{displaymath}
It follows that $\kappa(V_1)=\dim |\B_3|=0$ and $K_{V_1}^2=-6$ and thus
$\kappa(S/g_1)=0$ and $K_{S/g_1}^2=-2$.
Since $p_g(S/g_1)=0$ and $q(S/g_1)=0$, $S/g_1$ is birational to an Enriques surface.
\medskip

ii) Let $\bpi_2\colon V_2\rightarrow W$ be the intermediate singular double cover of $\bpi\colon V\rightarrow W$ associated to the data $2\L_2\equiv \Delta_1+\Delta_3$. Note that $V_2$ contains six disjoint $(-1)$-curves $\bpi_2^{-1}(C_k), \bpi_2^{-1}(C_k')$ for $k=1,2,4$  and four nodal curves
$\bpi_2^{-1}(C_3), \bpi_2^{-1}(C_3')$. Contract all these curves, and we obtain $S/g_2$.  Therefore $K_{S/g_2}^2=K_{V_2}^2+6$ and $\kappa(S/g_2)=\kappa(V_2)$. On the other hand
\begin{displaymath}
	\begin{array}{rl}
		2K_{V_2} \!\!\!\!& \equiv \bpi_2^*(2K_W+2\L_2) \equiv \bpi_2^*(2K_W+\B_1+\B_3+\sum_{k=1,2,4}(C_k+C_k'))  \textrm{ (see  (\ref{Deltai}) and (\ref{Li}))}\\
						       & \equiv \bpi_2^*(E+\B_3)+\bpi_2^*(\sum_{k=1,2,4}(C_k+C_k'))\equiv \bpi_2^*(F)+\bpi_2^*(\sum_{k=1,2,4}(C_k+C_k'))\\
					&  \textrm{ (see Lemma \ref{lem:rational} (a) and Proposition \ref{prop:elliptic1} (c))}.
	\end{array}
\end{displaymath}
It follows that $\kappa(V_2)=\dim |F|=1$ and $K_{V_2}^2=-6$ and thus
$\kappa(S/g_2)=1$ and $K_{S/g_2}^2=0$.
\end{proof}

\begin{rem}
By Proposition \ref{prop:intermediate} the surfaces $S$ constructed in Section \ref{construction} are contained in the case of \cite[Theorem 1.1 (c)]{YC15}.
\end{rem}

\begin{rem}\label{branchEnriques}
Lee and Shin \cite{YLYS14} gave possible branch divisors of involutions on minimal surfaces of general type with $p_g=0$ and $K^2=7$. Proposition \ref{prop:intermediate} provides three examples for the possible branch divisors in the list of \cite{YLYS14}:
\begin{itemize}
	\item[(1)] $k=9$, $K_W^2=-2$, $W$ is birational to an Enriques surface and $B_0=\substack{\Gamma_{0}\\(3,-2)}$.
	\item[(2)] $k=7$, $K_W^2=0$, $W$ is a minimal properly elliptic surface and $B_0=\substack{\Gamma_{0}\\(2,-2)}$.
	\item[(3)] $k=5$, $K_W^2=2$, $W$ is a minimal surface of general type with $p_g(W)=0$ and $K_W^2=2$ (i.e. a numerical Campedelli surface), and $B_0=\substack{\Gamma_{0}\\(1,-2)}$.
\end{itemize}
We note the three branch divisors of the surface $S$ constructed in Section \ref{construction} are different from ones of an Inoue's surface (see \cite[Section 5]{YLYS14}) and the surface constructed by the first named author (see \cite[Remark 5.1]{YC13}).
\end{rem}

The group $G$ acts on $H^0(S,\O_S(2K_S))$. Let $G^*=\{1,\chi_1,\chi_2,\chi_3\}$ be the character group of $G$, where $\chi_i$ is orthogonal to $g_i$ for $i=1,2,3$. Then
\[
H^0(S,\O_S(2K_S))=H^0(S,\O_S(2K_S))^{\mathrm{inv}}\bigoplus \bigoplus_{\chi_i\in G^*\setminus \{1\}} H^0(S,\O_S(2K_S))^{\chi_i}
\]
\begin{cor}\label{cor:eigenspace}
For the surface $S$ constructed in Section \ref{construction}
\begin{displaymath}
	\begin{array}{l}
		h^0(S,\O_S(2K_S))^{\mathrm{inv}}=5, \\ h^0(S,\O_S(2K_S))^{\chi_1}=2,\ h^0(S,\O_S(2K_S))^{\chi_2}=1,\ h^0(S,\O_S(2K_S))^{\chi_3}=0.
	\end{array}
\end{displaymath}
\end{cor}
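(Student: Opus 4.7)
The plan is to pull back the $G$-action through the bidouble cover $\bpi\colon V\to W$ and reduce each eigenspace computation to an $h^0$ of an explicit line bundle on $W$. Since $\epsilon\colon V\to S$ is a birational morphism of smooth surfaces with $\epsilon^*(2K_S)\equiv \bpi^*(D)$ by (\ref{2KBF}), we have $H^0(S,\O_S(2K_S))\cong H^0(V,\bpi^*(D))$. Combined with the standard bidouble cover decomposition $\bpi_*\O_V=\O_W\oplus\bigoplus_{i=1}^{3}\O_W(-\L_i)$, on which $G$ acts trivially on $\O_W$ and via $\chi_i$ on $\O_W(-\L_i)$, this produces a $G$-equivariant isomorphism
\[
H^0(S,\O_S(2K_S))\;=\;H^0(W,\O_W(D))\;\oplus\;\bigoplus_{i=1}^3 H^0(W,\O_W(D-\L_i)),
\]
in which the first summand is the invariant piece and each $H^0(W,\O_W(D-\L_i))$ is the $\chi_i$-eigenspace. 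The problem thereby reduces to four explicit cohomology computations on $W$.

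The invariant dimension is already in hand from \lemref{lem:bir}, which gives $h^0(W,\O_W(D))=h^0(W,\O_W(\B_2+F))=5$. For the total dimension, I apply Riemann--Roch on $S$ together with Kodaira vanishing (both available since $K_S$ is ample by \thmref{thm:mainthm}) to get $h^0(S,\O_S(2K_S))=\chi(\O_S)+K_S^2=8$. Substituting the formulas (\ref{Li}) for $\L_i$ together with the identity $D\equiv -2K_W+2L-2E_0-E$ into the Picard basis of $W$ gives
\[
D-\L_1\equiv -K_W+E_3'+E_4',\quad D-\L_2\equiv -K_W+E_1'+E_2'+E_4'-E,\quad D-\L_3\equiv E_1'+E_2'+E_3'-E.
\]
The class $D-\L_3$ is patently not effective: any effective representative $\Phi$ satisfies $\Phi\cdot E_k'=-1$ for $k=1,2,3$, forcing $E_k'\le\Phi$ for each such $k$, and subtraction then produces the non-effective divisor $-E$. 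Hence $h^0(W,\O_W(D-\L_3))=0$.

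For the remaining two eigenspaces, Riemann--Roch on $W$ gives $\chi(\O_W(D-\L_1))=2$ and $\chi(\O_W(D-\L_2))=1$. I show that $h^2=0$ in both cases by verifying that the Serre-dual classes $2K_W-E_3'-E_4'$ and $2K_W-E_1'-E_2'-E_4'+E$ are not effective: the key input is the irreducible $(-1)$-curve $\B_1\equiv -2K_W+E$ from \propref{prop:elliptic1}, against which both classes pair to strictly negative numbers, forcing $\B_1$ to appear in the support of a hypothetical effective representative with a predictable multiplicity; after subtracting the required multiple of $\B_1$, the coefficient of $L$ in the residual class becomes strictly negative, contradicting effectivity. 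This yields $h^0(W,\O_W(D-\L_1))\ge 2$ and $h^0(W,\O_W(D-\L_2))\ge 1$. Since $5+2+1+0=8=h^0(S,\O_S(2K_S))$, both lower bounds must be equalities, and the claimed dimensions follow.

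The main obstacle is the non-effectivity check for the two Serre-dual classes in the previous step; the argument via intersection with $\B_1$ is uniform but requires careful bookkeeping of multiplicities. An alternative approach would be Kawamata--Viehweg vanishing applied to $D-\L_i=K_W+M_i$, but while $M_1=-2K_W+E_3'+E_4'$ turns out to be nef and big (so this works cleanly for the $\chi_1$-piece), its analog $M_2$ has self-intersection $0$ and fails to be big; this asymmetry makes the uniform dimension-counting route the cleaner one.
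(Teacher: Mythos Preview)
Your proof is correct and self-contained, but it takes a genuinely different route from the paper. The paper's proof is a two-line appeal to an external result: Proposition~\ref{prop:intermediate} gives $(K_SR_1,K_SR_2,K_SR_3)=(5,3,1)$, and then \cite[Proposition~2.2]{YC15} translates these intersection numbers directly into the eigenspace dimensions. That proposition is a general structural statement about pairs $(S,G)$ with $G\cong\mathbb Z_2\times\mathbb Z_2$ acting on a surface with $p_g=0$ and $K^2=7$, proved once and for all in the earlier paper.

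Your argument instead unwinds the bidouble-cover machinery explicitly on this particular $W$: you identify each eigenspace with $H^0(W,\O_W(D-\L_i))$ via the Pardini decomposition, then compute these four cohomology groups by hand using Riemann--Roch, the irreducibility of $\B_1$ from \propref{prop:elliptic1}, and a dimension count against the known value $h^0(S,\O_S(2K_S))=8$. The advantage of your approach is that it is entirely internal to the present paper and makes transparent \emph{where} on $W$ the bicanonical sections live; the advantage of the paper's approach is brevity and the fact that it highlights the corollary as an instance of a general phenomenon for case~(c) pairs, which is the thematic point of the section. Your careful handling of the $h^2$-vanishing via intersection with $\B_1$ (and the observation that nefness of $L$ forces nonnegative $L$-coefficient for effective classes) is a nice touch that the black-box citation hides.
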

\begin{proof}
By Proposition \ref{prop:intermediate} $(K_SR_1,K_SR_2,K_SR_3)=(5,3,1)$. We apply \cite[Proposition 2.2]{YC15}.
\end{proof}

\begin{cor}\label{cor:notcomposed}
The bicanonical morphism $\varphi\colon S\rightarrow \mathbb{P}^7$ is not composed with $g_i$ for any $i=1,2,3$.
\end{cor}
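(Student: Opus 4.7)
The plan is to reduce the statement to a numerical check using the decomposition of $H^0(S,\O_S(2K_S))$ obtained in Corollary~\ref{cor:eigenspace}. Set $V:=H^0(S,\O_S(2K_S))$ and let $V=V_+\oplus V_-$ be the decomposition into the $(\pm1)$-eigenspaces of the involution $g_i$. The induced involution on $\PP^7=\PP(V^\vee)$ has fixed locus $\PP(V_+^\vee)\sqcup\PP(V_-^\vee)$, so if $\varphi$ were composed with $g_i$ the image $\varphi(S)$ would be contained in this fixed locus. Irreducibility of $\varphi(S)$ then forces it into one of the two linear subspaces $\PP(V_\pm^\vee)$; and since $\varphi$ is defined by the complete linear system $|2K_S|$, $\varphi(S)$ is nondegenerate in $\PP^7$. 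Hence $V=V_+$ or $V=V_-$; equivalently, $g_i$ acts on $V$ by the scalar $+1$ or $-1$.

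It therefore suffices to exhibit nonzero vectors in both $V_+$ and $V_-$ for each $i\in\{1,2,3\}$. The involution $g_i$ acts as $+1$ on $H^0(S,\O_S(2K_S))^{\mathrm{inv}}$ and on exactly one of the three $\chi$-eigenspaces (the one for which $\chi(g_i)=1$), and as $-1$ on the remaining two. Consequently $\dim V_+$ equals $5$ plus the dimension of a single $\chi$-eigenspace, and $\dim V_-$ equals the sum of the other two. Inserting the values $h^0(S,\O_S(2K_S))^{\mathrm{inv}}=5$ and $\{h^0(S,\O_S(2K_S))^{\chi_j}\}_{j=1}^3=\{2,1,0\}$ supplied by Corollary~\ref{cor:eigenspace}, one finds $\dim V_+\in\{5,6,7\}$ and $\dim V_-\in\{1,2,3\}$. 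Both are strictly positive for every $i$, so $g_i$ cannot act as a scalar on $V$ and $\varphi$ is not composed with $g_i$.

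I do not foresee any real obstacle: once one notes that $\varphi$ being composed with $g_i$ forces $g_i$ to act on $H^0(S,\O_S(2K_S))$ by $\pm 1$ (using that the linear system is complete, and hence $\varphi(S)$ is nondegenerate), the conclusion is an immediate numerical consequence of Corollary~\ref{cor:eigenspace}, independent of which specific character is identified with which involution.
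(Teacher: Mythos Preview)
Your proof is correct and is exactly the argument the paper intends: Corollary~\ref{cor:notcomposed} is stated without proof immediately after Corollary~\ref{cor:eigenspace}, and the implicit reasoning is precisely the eigenspace count you spelled out. Your identification of $V_\pm$ via the convention that $\chi_i$ is orthogonal to $g_i$ (so $g_i$ acts as $+1$ on $H^0(S,\O_S(2K_S))^{\chi_i}$ and as $-1$ on the other two character spaces) is correct, and the resulting dimensions $(\dim V_+,\dim V_-)\in\{(7,1),(6,2),(5,3)\}$ finish the job.
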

\begin{cor}\label{cor:birational}
Let $S$ be the surface constructed in Section \ref{construction}. Then the bicanonical morphism $\varphi\colon S\rightarrow \mathbb{P}^7$ is birational.
\end{cor}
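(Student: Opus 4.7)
The plan is to compare the bicanonical map $\varphi$ with the birational morphism $\phi\colon W\rightarrow\mathbb{P}^4$ from \lemref{lem:bir}, exploiting the $G$-decomposition of $H^0(S,\O_S(2K_S))$ together with \corref{cor:notcomposed}.

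First, I would identify the rational map on $S$ cut out by the $5$-dimensional invariant subspace $H^0(S,\O_S(2K_S))^{\mathrm{inv}}$ of \corref{cor:eigenspace}. Using $\epsilon^*(2K_S)\equiv \bpi^*(D)$ from (\ref{2KBF}) and the standard bidouble cover formula, the invariant sections are exactly the pullbacks of $H^0(W,\O_W(D))=H^0(W,\O_W(\B_2+F))$. Hence the invariant map $\varphi_{\mathrm{inv}}\colon S\dashrightarrow\mathbb{P}^4$ factors as
\[
S \xrightarrow{\ \pi\ } \Sigma \xrightarrow{\ \cong\ } \phi(W)\subset\mathbb{P}^4,
\]
where $\pi$ has degree $|G|=4$ and $\Sigma\cong\phi(W)$ via the common contraction of the eight $(-2)$-curves $C_k,C_k'$. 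In particular $\varphi_{\mathrm{inv}}$ has degree $4$ onto $\phi(W)$.

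Next, I would choose coordinates on $\mathbb{P}^7$ so that the first five span the invariant subspace, and let $p_{\mathrm{inv}}\colon\mathbb{P}^7\dashrightarrow\mathbb{P}^4$ be the projection onto these. Then $p_{\mathrm{inv}}\circ\varphi=\varphi_{\mathrm{inv}}$. Since $\varphi$ is $G$-equivariant and the invariant linear forms are by definition the $G$-fixed ones, the restriction $p_{\mathrm{inv}}|_{\varphi(S)}\colon\varphi(S)\rightarrow\phi(W)$ is precisely the quotient map $\varphi(S)\rightarrow\varphi(S)/G$.

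The key observation is then that $G$ acts with trivial generic stabilizer on $\varphi(S)$: if some $g_i$ fixed a generic point of $\varphi(S)$, it would act trivially on the whole image, meaning $\varphi$ is composed with $g_i$, contrary to \corref{cor:notcomposed}. Therefore $\deg\bigl(\varphi(S)\to\phi(W)\bigr)=|G|=4$, and the multiplicativity of degrees gives
\[
4=\deg\varphi_{\mathrm{inv}}=\deg\varphi\cdot\deg\bigl(\varphi(S)\to\phi(W)\bigr)=4\cdot\deg\varphi,
\]
forcing $\deg\varphi=1$, i.e.\ $\varphi$ is birational. The only step that requires care is checking that $\varphi_{\mathrm{inv}}$ does factor through $\Sigma\cong\phi(W)$ with degree exactly $4$ — this rests on the already-established facts that $\phi$ contracts exactly the same eight $(-2)$-curves as $\eta$ and that $\bpi$ is a degree-$4$ cover — but once this identification is made, the rest of the argument is purely a degree count driven by \corref{cor:notcomposed}.
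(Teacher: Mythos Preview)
Your proposal is correct and follows essentially the same approach as the paper. The paper's proof is more terse: it observes that $|\eta_*(\B_2+F)|$ embeds $\Sigma$ into $\PP^4$ (\lemref{lem:bir}), that $2K_S\equiv\pi^*(\eta_*(\B_2+F))$, and that $\pi\colon S\to\Sigma$ is the $G$-quotient, so $\varphi$ is birational iff it is not composed with any $g_i$; then \corref{cor:notcomposed} finishes. Your argument unpacks this last equivalence into the explicit degree count $4=\deg\varphi\cdot\deg(\varphi(S)\to\phi(W))$, which is exactly the content of the paper's ``thus'' step.
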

\begin{proof}
The linear system $|\phi(B_2+F)|$ defines an embedding  $\Sigma \rightarrow \mathbb{P}^4$ by Lemma \ref{lem:bir}. Moreover $2K_S\equiv \pi^*(\phi(B_2+F))$ (see (\ref{2KBF})) and $\pi\colon S\rightarrow \Sigma$ is a $G$-Galois covering of $\Sigma$. Thus $\varphi$ is birational if and only if $\varphi$ is not composed with all $g_1,g_2$ and $g_3$.
By Corollary \ref{cor:notcomposed} $\varphi$ is birational.
\end{proof}

\section{Appendix}\label{pfcp}

Before the proofs of \lemref{lem:p0}, \lemref{lem:gamma0} and \propref{prop:p},
we make the following remark.
\begin{rem}\label{rem:upshot}
 Assume that $(\gamma_0, o)$ is an elliptic curve with its group structure, $p_0, r_0, s_0$ are nontrivial $2$-torsion points of $\gamma_0$ and $p_1, \ldots, p_4$ are halves of $p_0$. We may assume that the sum of $p_1$ and $p_2$ is $o$, i.e. $p_1+p_2\equiv 2o$. Then $p_3+p_4\equiv 2o$. By identifying
$\gamma_0$ with its image under the embedding into $\PP^2$ by $|3o|$, one
sees easily that $p_0, p_1, \ldots, p_4$ satisfy conditions (I)-(III).
For example, let $\gamma_2$ be the conic passing through five points $p_1, p_2, p_3, p_4, p_1'$.
Then
$$\gamma_2|_{\gamma_0}-(2p_1+p_2+p_3+p_4)\equiv 6o-(4o+p_1)\equiv p_2.$$
This means $\gamma_2$ contains $p_2'$. However we fail to find $p$ or to verify the condition (IV) in such a geometrical way.
\end{rem}

We use the same notation in Section~2.

\begin{proof}[Proof of \lemref{lem:p0}]

A smooth conic $\c_\alpha$ passing through  $p_1, p_2, p_3, p_4$ has the defining equation
       $$-(1+\alpha)x_1x_2+x_1x_3+\alpha x_2x_3=0.$$
       where $\alpha \not =0, -1$.
       The tangent lines of $\c_\alpha$ to the points $p_1, p_2, p_3, p_4$ have the following defining equations:
\begin{displaymath}
       \begin{array}{lll}
       T_{p_1}\c_\alpha: -(1+\alpha)x_2+x_3=0,&&
       T_{p_2}\c_\alpha: -(1+\alpha)x_1+\alpha x_3=0,\\
       T_{p_3}\c_\alpha: x_1+\alpha x_2=0,&&
       T_{p_4}\c_\alpha: -\alpha x_1-x_2+(1+\alpha)x_3=0.
       \end{array}
\end{displaymath}
       The condition~(II) implies that there are two smooth conics $\gamma_1:=\c_s, \gamma_2:=\c_t$
       for $s, t \not \in \{0, -1\}$, such that
       $p_0=T_{p_3}\c_s \cap T_{p_4}\c_s=T_{p_1}\c_t \cap T_{p_2} \c_t$.
       From the defining equations of the tangent lines, one sees that $t=-s$, $p_0=(t:1:1+t)$ and $t \not =0, \pm 1$. And $p_0$ clearly lies on $\l_{r_0s_0}: x_1+x_2 =x_3$.
\end{proof}
Fix $p_0=(t:1:1+t)$ with $t \not=0, \pm 1$. Then the defining equations of $\l_{p_0p_k}$ for $k=1, \ldots, 4$
are as follows
\begin{displaymath}
	\begin{array}{lll}
	\l_{p_0p_1}: -(1+t)x_2+x_3=0, && \l_{p_0p_2}: -(1+t)x_1+tx_3=0,\\
	\l_{p_0p_3}: x_1-tx_2=0,      && \l_{p_0p_4}: tx_1-x_2+(1-t)x_3=0.
	\end{array}
\end{displaymath}
\begin{proof}[Proof of \lemref{lem:gamma0}]
The existence of $\gamma_0$ can be shown by dimension counting.
It is clear that $\gamma_0$ is irreducible. Since the tangent line of $\gamma_0$ at $p_k$
is $\l_{p_0p_k}$,  the projection from $p_0$ gives a degree two map from $\gamma_0$ to  $\PP^1$ ramified at (at least) $4$ points $p_1, \ldots, p_4$. Since $p_a(\gamma_0)=1$,
this shows that $\gamma_0$ is a smooth elliptic curve.
\end{proof}
\begin{rem}
Actually, the curve $\gamma_0$  has the defining equation:
    \begin{align*}
    x_1^2[(1+t)x_2-x_3]+x_2^2[-(1+t)x_1+tx_3]+x_3^2(x_1-tx_2)=0.
   \end{align*}
\end{rem}

\begin{proof}[Proof of \propref{prop:p}]We have verified that $p_0, p_1, \ldots, p_4$ satisfy
conditions (I), (II), (II') in \lemref{lem:gamma0}.
So the sixth point $p$ satisfies condition (I)-(III) if and only if
$$p \not \in \l_{p_0p_1}\cup \ldots \cup l_{p_0p_4} \cup c_{p_0p_1p_2p_3p_4} \cup \gamma_0.$$
From now on, we assume $p$ satisfies conditions (I)-(III).

Consider the linear systems
$$\delta_1:=|\O_{\PP^2}(3)-p_0-p_1-p_2-p_3-p_4-p_2'-p_4'|,\ \delta_2 :=|\O_{\PP^2}(3)-p_0-p_1-p_2-p_3-p_4-p_1'-p_3'|.$$
Both have dimension $2$.
The point $p$ satisfies the condition (IV) if and only if
there exists cubics $\lambda_1 \in \delta_1$ and $\lambda_2 \in \delta_2$
such that both $\lambda_1$ and $\lambda_2$ have a singular point  at $p$.

Note that $|\delta_1|$ is generated by
  \begin{align*}
      \l_{p_0p_2}+\l_{p_1p_4}+\l_{p_3p_4}&: [-(1+t)x_1+tx_3](x_2-x_3)(x_1-x_2)=0,\\
      \l_{p_0p_4}+\l_{p_1p_2}+\l_{p_2p_3}&: [tx_1-x_2+(1-t)x_3]x_3x_1=0,\\
      \c_{p_0p_1p_2p_3p_4}+\l_{p_2p_4} &: [(-1+t^2)x_1x_2+x_1x_3-t^2x_2x_3](x_1-x_3)=0.
      \end{align*}
     Denote by $F(x_1, x_2, x_3)$ the Jacobian of three polynomials on the left of the above equations.
Similarly, $|\delta_2|$ is generated by
      \begin{align*}
      \l_{p_0p_1}+\l_{p_2p_3}+\l_{p_3p_4}&: [-(1+t)x_2+x_3]x_1(x_1-x_2)=0,\\
      \l_{p_0p_3}+\l_{p_1p_4}+\l_{p_1p_2}&: (x_1-tx_2)(x_2-x_3)x_3=0,\\
      \c_{p_0p_1p_2p_3p_4}+\l_{p_1p_3} &: [(-1+t^2)x_1x_2+x_1x_3-t^2x_2x_3]x_2=0.
      \end{align*}
Denote by $G(x_1, x_2, x_3)$ the Jacobian of three polynomials on the left of the above equations.
Then a direct calculation shows that
\begin{align*}
F(x_1,x_2,x_3)&=3[-(1+t)x_1+tx_3][tx_1-x_2+(1-t)x_3]Q(x_1,x_2,x_3),\\
G(x_1,x_2,x_3)&=3[-(1+t)x_2+x_3](x_1-tx_2)Q(x_1,x_2,x_3),
\end{align*}
where
$$Q(x_1,x_2,x_3)=[-(1+\alpha(t))x_1x_2+x_1x_3+\alpha(t)x_2x_3][-(1+\beta(t))x_1x_2+x_1x_3+\beta(t)x_2x_3].$$

It follows that there exist $\lambda_1 \in \delta_1$ and $\lambda_2 \in \delta_2$
such that both $\lambda_1$ and $\lambda_2$ have a singular point at $p$ if and only if
$Q(p)=0$, i.e. $p \in \c_{\alpha(t)}\cup \c_{\beta(t)}$.

Observe that $\c_{\alpha(t)} \cap \l_{p_kp_l}=\{p_k, p_l\}=\c_{\beta(t)}\cap \l_{p_kp_l}$ $(1 \le k \not=l \le 4)$, $c_{p_0p_1p_2p_3p_4}\cap c_{\alpha(t)}=c_{p_0p_1p_2p_3p_4}\cap c_{\beta(t)}=\{p_1, p_2, p_3, p_4\}$.
Therefore
the point $p$ verifies conditions (I)-(IV) if and only if
$p \in c_{\alpha(t)}\cup c_{\beta(t)}$ and $p \not\in \l_{p_0p_1}\cup \ldots \cup \l_{p_0p_4} \cup \gamma_0$.
\end{proof}

\section*{Acknowledgements}
 The first named author was supported by the National Natural Science Foundation of China (No.~11501019 and No.~11871084). The second named author was supported by the National Research Foundation of Korea(NRF) grant funded by the Korea government(MSIT) (No.~2010-0020413) and by Basic Science Research Program through the National Research Foundation of Korea(NRF) funded by the Ministry of Education (No. 2017R1D1A1B03028273).

\newpage

\noindent Yifan~Chen,\\
School of Mathematical Sciences, Beihang University,\\ Xueyuan Road No.~37, Beijing~100191, P. R. China\\
Email:~chenyifan1984@gmail.com\\\smallskip

\noindent YongJoo Shin,\\
Korea Institute for Advanced Study,\\
85 Hoegiro, Dongdaemun-gu, Seoul 02455, Republic of Korea\\
Email: haushin@kias.re.kr

\end{document}